\DeclareMathAlphabet\mathcaligr{OMS}{cmsy}{m}{n}
\renewcommand{\mathcal}{\mathcaligr}
\newcommand{\cal}{\mathcal}
\newcommand{\bb}{\mathbb}
\def\R{{\bb R}}
\def\E{{\bb E}}
\def\var{\operatorname{\bb Var}}
\def\cov{\operatorname{\bb Cov}}
\def\pr{{\bb P}}
\def\calW{\mathcal{W}}
\def\calN{\mathcal{N}}
\newcommand{\refs}[1]{(\ref{#1})}
\newtheorem{Theorem}{Theorem}[section]
\newtheorem{Proposition}[Theorem]{Proposition}
\newtheorem{Lemma}[Theorem]{Lemma}
\newtheorem{Corollary}[Theorem]{Corollary}
\begin{document}
\begin{frontmatter}

\title{Asymptotics of supremum distribution of a Gaussian process over
a Weibullian time}
\runtitle{Supremum of a Gaussian process over a Weibullian time}

\begin{aug}
\author{\fnms{Marek} \snm{Arendarczyk}\thanksref{e1}\ead[label=e1,mark]{marendar@math.uni.wroc.pl}} \and
\author{\fnms{Krzysztof} \snm{D{\normalfont{\c{E}}}bicki}\corref{}\thanksref{e2}\ead[label=e2,mark]{debicki@math.uni.wroc.pl}}
\runauthor{M. Arendarczyk and K. D\c{e}bicki}
\pdfauthor{Marek Arendarczyk, Krzysztof Debicki}
\address{Mathematical Institute, University of Wroc\l aw,
pl. Grunwaldzki 2/4, 50-384 Wroc\l aw, Poland.\\ E-mail: \printead*{e1,e2}}
\end{aug}

% HISTORY:
\received{\smonth{7} \syear{2009}}
\revised{\smonth{1} \syear{2010}}

% ABSTRACT
%
\begin{abstract}
Let $\{X(t)\dvtx t\in[0,\infty)\}$ be a centered Gaussian process with
stationary increments and variance function $\sigma^2_X(t)$. We
study the exact asymptotics of
$\pr(\sup_{t\in[0,T]}X(t)>u)$ as $u\to\infty$, where $T$
is an independent of $\{X(t)\}$ non-negative Weibullian random
variable.
As an illustration, we work out the asymptotics of the supremum
distribution of
fractional Laplace motion.
\end{abstract}

% KEYWORDS
%
\begin{keyword}
\kwd{exact asymptotics}
\kwd{fractional Laplace motion}
\kwd{Gaussian process}
\end{keyword}

\end{frontmatter}

%s1 ###
\section{Introduction}
The problem of analyzing the asymptotic properties of
%
%e1 ###
\begin{equation}\label{main}
\pr\Bigl(\sup_{t\in[0,T]}X(t)>u\Bigr) \qquad \mbox{as }  u\to\infty
\end{equation}
for a centered Gaussian process with stationary increments
$\{X(t)\}$ and deterministic $T>0$  plays an important role in many
fields of applied and theoretical probability.

One of the seminal results in this area is the exact asymptotic
%
%e2 ###
\begin{equation}
\pr\Bigl(\sup_{t\in[0,T]}X(t)>u\Bigr)=\pr\bigl(X(T)>u\bigr)\bigl(1+\mathrm{o}(1)\bigr)\label{Ber}
\end{equation}
as $u\to\infty$, which holds for a wide class of centered Gaussian
processes (see \cite{Pit79,Tal88} and \cite{Pit96} for extensions of
this result).

Some recently studied problems in, for example, queueing theory (dual risk
theory) or hydrodynamics,
motivate the analysis of \refs{main}
for $T$ being a non-negative random variable independent of $\{X(t)\}$.
In particular,
the tail asymptotics of the \textit{steady-state buffer content} for a
\textit{hybrid fluid queue}
with the input modeled by a superposition of an integrated \textit{on-off
process} and a Gaussian process with stationary increments
can be reduced (under some assumptions) to the analysis of~\refs{main}
for some suitably chosen random $T$ (see, e.g., \cite{BDZ05} and
references therein).
%$\diamond$
Additionally, the analysis of the
supremum distribution of subordinated Gaussian processes is strongly
related to
\refs{main} over random $T$.
For example, the asymptotics of the supremum of a \textit{fractional
Laplace motion}
(used in hydrodynamic models -- see, e.g., \cite{Koz04,Koz06})
over a deterministic interval can be reduced to \refs{main} with
$X(t)$ being a fractional Brownian motion and
$T$ having Weibull distribution.
We refer to Section \ref{s.flm} for details.

We note that the additional variability of $T$ may influence
the form of the asymptotics of \refs{main}, leading to
structures qualitatively different from \refs{Ber}. This was observed in
\cite{BDZ04}, under the scenario that $T$ has a regularly varying tail
distribution
(see also \cite{Abu07}).

Motivated by the above applications,
in this paper, we focus on the exact asymptotics of \refs{main}
when $T$ is a random variable, independent of $\{X(t)\}$, with
asymptotically Weibullian tail distribution.
In Theorem
\ref{th.main}, we find the structural form of the asymptotics that
holds for a wide class of Gaussian processes
with stationary increments and convex variance function (see
assumptions~(A1)--(A3) in Section \ref{s.preliminaries}).
Complementing this, in Corollary \ref{cor.exact}, we obtain an
explicit form for the asymptotics, which appear to be Weibullian.

Additionally, for $\{X(t)\}$ being a fractional Brownian motion,
we provide the exact asymptotics of \refs{main} for the whole range of
Hurst parameters $H\in(0,1]$.
It appears that in the case of $H<1/2$ (concave variance function), the
exact asymptotics
takes a form qualitatively different from \refs{Ber}.

Finally, in Section \ref{s.flm}, we apply the obtained results to the
analysis of extremal behavior
of fractional Laplace motion; see \cite{Koz04,Koz06}.

%%%%%%%%%%%%%%%%%%%%%%%%%%%%%%%%%%%%%%%%%%%%%%%%%%%%%%%%%%%%%%%%%%%%%%%%%%%%%%%%%%%%%%%%%%%%%%%%%%%%%%%%%%
%s2 ###
\section{Notation and preliminary results}\label{s.preliminaries}
Let $\{X(t)\dvtx t\in[0,\infty)\}$ be a centered Gaussian process with
stationary increments, a.s. continuous sample paths, $X(0)=0$ a.s. and
variance function $\sigma^2_X(t):=\var(X(t))$. We assume that:
\begin{longlist}[(A3)]
\item[(A1)] $\sigma^2_X(\cdot)\in C^1([0,\infty))$ is convex;
\item[(A2)] $\sigma^2_X(\cdot)$ is regularly varying at $\infty$ with parameter
$\alpha_\infty\in(1 , 2)$;
\item[(A3)] there exists $D>0$ such that $\sigma^2_X(t)\le Dt^{\alpha
_\infty}$
for each $t\ge0$.
\end{longlist}

We introduce the following classes of Gaussian processes:
\begin{itemize}
\item \textbf{fBm}: $X(t)=B_H(t)$ is a fractional Brownian motion
with Hurst parameter
$H\in(0,1]$, that is, a centered Gaussian process with stationary
increments and
$\sigma^2_{B_H}(t)=t^{2H}$ (note that (A2) is satisfied for $H\in
(1/2,1)$);
\item \textbf{IG}: $X(t)=\int_0^t Z(s)\,\mathrm{d}s$, where $\{Z(t)\dvtx t\ge0\}$
is a centered stationary Gaussian process with covariance function
$R(t)=\cov(Z(s),Z(s+t))$
which is regularly varying at $\infty$ with parameter $\alpha_\infty-2$.
\end{itemize}

In this paper, we analyze the asymptotics of
%
%e3 ###
\begin{equation}
\pr\Bigl(\sup_{t\in[0,T]}X(t)>u\Bigr)\label{randsup}
\end{equation}
as $u\to\infty$, where $T$ is a non-negative random variable,
independent of $\{X(t)\}$,
with asymptotically Weibullian tail distribution, that is,
%
%e4 ###
\begin{equation}
\pr(T>t)= Ct^{\gamma}\exp(-\beta t^\alpha) \bigl(1 + \mathrm{o}(1)\bigr)\label{weib}
\end{equation}
as $t \to\infty$,
where $\alpha, \beta, C > 0, \gamma\in\R$.
We write
$T\in\mathcal{W} (\alpha,\beta,\gamma, C)$ if $T$ satisfies \refs{weib}.

%The classical result of Berman \cite{Ber85} shows that under
%some regularity conditions on the variance function
%$\sigma^2_X(\cdot)$, providing it is convex, the following
%asymptotics holds for deterministic $T$:
%as $u\to\infty$.
%In this paper we extend this result to the
%case of random $T$ being asymptotically Weibullian.

%Before we state the main results,
Let us introduce some notation.
For given $H\in(0,1]$,
by $\mathcal{H}_H$, we denote the \textit{Pickands's constant} defined
by the limit
\[
\mathcal{H}_H = \lim_{T \to\infty} \frac{\mathcal{H}_H(T)}{T},
\]
where $\mathcal{H}_H(T):=\E\exp (\sup_{t \in[0,T]} \sqrt{2}
B_H(t) - t^{2H}  )$.
Moreover, let $\Psi(u):=\pr(\mathcal{N}>u)$, where $\mathcal{N}$
denotes the standard normal random variable.
$\dot{\sigma}_X(t)$ denotes the first derivative of $\sigma_X(t)$ and
$\dot{\sigma}^2_X(t)=2\sigma_X(t)\dot{\sigma}_X(t)$ the first
derivative of $\sigma^2_X(t)$.

Finally, we present a useful lemma,
which is also of independent interest.

\begin{Lemma} \label{l.product}
Let $X\in\calW(\alpha_1, \beta_1,\gamma_1, C_1)$,
$Y\in\calW(\alpha_2, \beta_2,\gamma_2, C_2)$ be independent
non-negative random variables.
Then
$X\cdot Y\in\calW(\alpha, \beta,\gamma, C)$ with
\begin{eqnarray*}
 \alpha&=& \frac{\alpha_1\alpha_2}{\alpha_1 + \alpha_2} ,\\
 \beta&=& \beta_1^{\alpha_2/(\alpha_1+\alpha_2)}
\beta_2^ {\alpha_1/(\alpha_1 + \alpha_2)}
 \biggl[
 \biggl(\frac{\alpha_1}{\alpha_2} \biggr)^ {\alpha_2/(\alpha_1 +
\alpha_2)} +
 \biggl( \frac{\alpha_2}{\alpha_1}  \biggr)^ {\alpha_1/(\alpha
_1+\alpha_2)}  \biggr] ,
\\
 \gamma&=& \frac{\alpha_1\alpha_2 + 2 \alpha_1\gamma_2 + 2\alpha
_2\gamma_1}{2 (\alpha_1 + \alpha_2 )},
\\
 C &=& \sqrt{2\uppi}C_1C_2\frac{1}{\sqrt{\alpha_1 + \alpha_2}} (
\alpha_1 \beta_1 )^ {(\alpha_2 - 2\gamma_1 + 2\gamma_2) /(2
(\alpha_1 + \alpha_2))} (\alpha_2 \beta_2)^ {(\alpha_1 -
2\gamma_2
+ 2\gamma_1)/(2(\alpha_1+\alpha_2))}.
\end{eqnarray*}
\end{Lemma}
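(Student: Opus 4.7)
My plan is to start from the representation
$$\pr(XY > u) = \int_0^\infty \pr(X > u/y)\,dF_Y(y)$$
and apply a Laplace (saddle-point) argument: for $u$ large the integrand is sharply peaked at the value $y_*(u)$ that minimises the combined exponent
$$\phi(y) = \beta_1 u^{\alpha_1} y^{-\alpha_1} + \beta_2 y^{\alpha_2},$$
which arises by pretending that $dF_Y(y)$ behaves like $C_2\alpha_2\beta_2 y^{\gamma_2+\alpha_2-1}e^{-\beta_2 y^{\alpha_2}}\,dy$ and substituting the Weibullian tail $\pr(X>u/y)\sim C_1(u/y)^{\gamma_1}e^{-\beta_1(u/y)^{\alpha_1}}$.

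Solving $\phi'(y_*)=0$ gives
$$y_*(u) = \left(\tfrac{\alpha_1\beta_1}{\alpha_2\beta_2}\right)^{1/(\alpha_1+\alpha_2)} u^{\alpha_1/(\alpha_1+\alpha_2)},$$
and a direct computation yields $\phi(y_*)=\beta u^\alpha$ with exactly the exponents $\alpha$ and $\beta$ stated in the lemma. The second derivative $\phi''(y_*)=\alpha_2\beta_2(\alpha_1+\alpha_2)y_*^{\alpha_2-2}$ sets the Gaussian width of the saddle. Next I would partition $(0,\infty)$ as $(0,y_*(1-\delta_u)]\cup[y_*(1-\delta_u),y_*(1+\delta_u)]\cup[y_*(1+\delta_u),\infty)$ with $\delta_u\to 0$ slowly, use convexity of $\phi$ together with the Weibullian tails of $X$ and $Y$ to show that the two outer pieces contribute $o(1)$ relative to the central one, and substitute the Weibullian asymptotics uniformly on the central piece. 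After a Taylor expansion $\phi(y)\approx\phi(y_*)+\tfrac12\phi''(y_*)(y-y_*)^2$, the rescaling $y = y_*+s\,\phi''(y_*)^{-1/2}$ turns the central integral into a Gaussian times $\sqrt{2\pi/\phi''(y_*)}$; collecting the polynomial prefactors evaluated at $y_*$ with the powers of $u$ coming from $y_*$ and $\phi''(y_*)^{-1/2}$ then reproduces the stated $\gamma$ and $C$.

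The main obstacle is making the Laplace step rigorous for a possibly non-absolutely-continuous $F_Y$. To avoid assuming a density for $Y$, I would integrate by parts,
$$\int \pr(X>u/y)\,dF_Y(y) = \int \pr(Y>y)\,d_y\pr(X>u/y) + (\text{vanishing boundary terms}),$$
so that the Weibullian asymptotic of $\pr(Y>y)$ (which is available directly) can be plugged in, while the differential $d_y\pr(X>u/y)$ is controlled via sandwich bounds based on the monotonicity of $y\mapsto\pr(X>u/y)$ and the Weibullian asymptotic of $X$. Once this is settled, the remaining algebra that combines the exponents from $y_*^{\gamma_2+\alpha_2-1-\gamma_1}$, $\phi''(y_*)^{-1/2}$ and the prefactor $u^{\gamma_1}$ into the stated expressions for $\gamma$ and $C$ is bookkeeping.
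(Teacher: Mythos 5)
Your proposal follows essentially the same route as the paper: the same integral representation, the same integration by parts against $\pr(Y>y)$ combined with sandwich bounds from monotonicity to avoid assuming a density for $Y$, the same saddle point $y_*(u)=\bigl(\tfrac{\alpha_1\beta_1}{\alpha_2\beta_2}\bigr)^{1/(\alpha_1+\alpha_2)}u^{\alpha_1/(\alpha_1+\alpha_2)}$, and the same Laplace localization (the paper packages the Gaussian step as Fedoryouk's theorem, Lemma \ref{th.fed}, and the resulting integral asymptotics as Lemma \ref{L1.Fed}). The plan is correct and the remaining work is, as you say, bookkeeping.
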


The proof of Lemma \ref{l.product} is presented in Section \ref{s.lem}.

%%%%%%%%%%%%%%%%%%%%%%%%%%%%%%%%%%%%%%%%%%%%%%%%%%%%%%%%%%%%%%%%%%%%%%%%%%%%%%%%%%%%%%%%%%%%%%%%%%%%%%%%%%
%s3 ###
\section{Main results}\label{s.general}

In this section, we present the main results of the paper. We begin
with the structural
form of the analyzed asymptotics (Theorem \ref{th.main}), then we present
an explicit asymptotic expansion (Corollary \ref{cor.exact}).
%The scenario when the Hurst parameter $H > 1/2$ goes in line with
%the classical result of Berman \cite{Ber85}, who proved that under
%some regularity conditions on the variance function
%$\sigma^2_X(\cdot)$, providing it is convex, the following
%asymptotics holds for deterministic $T$:
%as $u\to\infty$.
%In the following theorem we extend this result to the
%case of random $T$ being asymptotically Weibullian.

%%%%%%%%%%%%%%%%%%%%%%%%%%%%%%%%%%%%%%%%---main
%theorem---%%%%%%%%%%%%%%%%%%%%%%%
%
\begin{Theorem}\label{th.main}
Let $X(t)$ be a centered Gaussian process with stationary increments
and variance function that satisfies (\textup{A1})--(\textup{A3}) and
$T\in\mathcal{W}(\alpha, \beta,\gamma, C)$ be a non-negative
random variable, independent of $\{X(t)\}$.
Then, as $u \to\infty$,
\[
\pr\Bigl(\sup_{s \in[0,T]} X(s) >u \Bigr)
=
\pr\bigl( X(T) >u \bigr) \bigl(1 + \mathrm{o}(1)\bigr)
=
\pr\bigl(\sigma_X(T)\cdot\calN> u\bigr)\bigl(1 + \mathrm{o}(1)\bigr).
\]
\end{Theorem}

The proof of Theorem \ref{th.main}
is presented in Section \ref{s.thmain}.

\begin{Rem}
It is tempting to ask to what extent
\refs{main} behaves as $\pr( X(T) >u )$ for other (than Weibullian)
distributions of $T$.
Some limitations on the heaviness of the tail distribution of $T$ can
be inferred from \cite{BDZ04}, Theorem 2.1,
which states that
%
%e5 ###
\begin{equation}
\pr\Bigl(\sup_{s \in[0,T]} X(s) >u \Bigr)=\operatorname{Const} \pr\bigl(T>\sigma
_X^{-1}(u)\bigr)  \qquad \mbox{as }  u\to\infty, \label{regvar}
\end{equation}
if $T$ has regularly varying tail distribution at $\infty$.
Thus, the asymptotics of \refs{regvar} are qualitatively different
from those observed in Theorem \ref{th.main}.
We conjecture that an analog of Theorem \ref{th.main} is also true for
lighter-than-Weibullian tail distributions of $T$.
\end{Rem}

If the variance function of $\{X(t)\}$ is regular enough (in such a way
that $\sigma_X(T)$ is asymptotically Weibullian), then
the combination of Theorem \ref{th.main} with Lemma \ref{l.product}
enables us to obtain the exact form of the asymptotics.

\begin{Corollary}\label{cor.exact}
Let $X(t)$ be a centered Gaussian process with stationary increments
and variance function that satisfies (\textup{A1}) and $\sigma
_X^2(t)=Dt^{\alpha_\infty}+\mathrm{o}(t^{\alpha_\infty-\alpha})$ as $t\to
\infty$
for $\alpha_\infty\in(1,2)$ and $D>0$. If
$T\in\mathcal{W}(\alpha, \beta,\gamma, C)$ is a non-negative
random variable independent of $\{X(t)\}$, then
\[
\sup_{s \in[0,T]} X(s)\in\mathcal{W}(\widetilde{\alpha
},\widetilde{ \beta},\widetilde{\gamma},\widetilde{ C})
\]
with
\begin{eqnarray*}
\widetilde{\alpha}&=&\frac{2\alpha}{\alpha+\alpha_\infty},\qquad
\widetilde{ \beta}=
\beta^{\alpha_\infty/(\alpha+\alpha_\infty)} \biggl(\frac
{D}{2} \biggr)^{ \alpha/(\alpha+\alpha_\infty)}
\biggl (  \biggl(\frac{\alpha}{\alpha_\infty}  \biggr)^{ \alpha
_\infty/(\alpha+\alpha_\infty)} +
 \biggl(\frac{\alpha_\infty}{\alpha}  \biggr)^{\alpha
/(\alpha+\alpha_\infty)}  \biggr),\\
\widetilde{\gamma}&=&\frac{2\gamma}{\alpha+\alpha_\infty},\qquad
\widetilde{ C}=
CD^{-1/\alpha_\infty}\sqrt{\frac{\alpha_\infty}{2(\alpha+\alpha
_\infty)}} \biggl( \frac{\alpha_\infty}{2\alpha\beta} D^{\alpha
_\infty/\alpha}  \biggr)^{ \gamma/(\alpha+\alpha_\infty)}.
\end{eqnarray*}
\end{Corollary}

The proof of Corollary \ref{cor.exact} is given in Section \ref{s.thexact}.

Below, we apply the obtained asymptotics to IG processes. The family of
fBm is analyzed separately
in Section \ref{s.fbm}.
Due to the self-similarity of fBm, we are able to give a proof
(independent of Theorem \ref{th.main})
that covers the whole range of Hurst parameters $H\in(0,1]$.
\begin{Example}
Let $T\in\mathcal{W}(\alpha, \beta,\gamma, C)$ and
$X(t)=\int_0^t Z(s)\,\mathrm{d}s$, where $\{Z(s)\dvtx s\ge0\}$ is a centered
stationary Gaussian process with continuous covariance function $R(t)$
such that
$R(t)=Dt^{\alpha_\infty-2}+ \mathrm{o}(t^{\alpha_\infty-2-\alpha})$ as $t
\to\infty$ with $\alpha_\infty\in(1,2)$.
Following Karamata's theorem (see, e.g., \cite{Bin87}, Proposition~1.5.8),
\[
\sigma_X^2(t) = 2\int_0^t \,\mathrm{d}s \int_0^s R(v) \,\mathrm{d}v =
\frac{2D}{\alpha_\infty(\alpha_\infty- 1)} t^{\alpha_\infty}
+\mathrm{o}(t^{\alpha_\infty-\alpha})
\]
as $t \to\infty$. Hence, by Corollary \ref{cor.exact}, we have
\[
\sup_{t\in[0,T]}X(t)\in\mathcal{W}(\widetilde{\alpha},\widetilde
{ \beta},\widetilde{\gamma},\widetilde{ C})
\]
with
\begin{eqnarray*}
\widetilde{\alpha}&=&\frac{2\alpha}{\alpha+\alpha_\infty},\\
\widetilde{ \beta}&=&
\beta^{ \alpha_\infty/(\alpha+\alpha_\infty)} \biggl(\frac
{D}{\alpha_\infty(\alpha_\infty- 1)} \biggr)^{ \alpha
/(\alpha+\alpha_\infty)}
 \biggl(  \biggl(\frac{\alpha}{\alpha_\infty}  \biggr)^{ \alpha
_\infty/(\alpha+\alpha_\infty)} +
 \biggl(\frac{\alpha_\infty}{\alpha}  \biggr)^{ \alpha
/(\alpha+\alpha_\infty)}  \biggr),\\
\widetilde{\gamma}&=&\frac{2\gamma}{\alpha+\alpha_\infty},\\
\widetilde{ C}&=&
C \biggl( \frac{2D}{\alpha_\infty(\alpha_\infty- 1)}
\biggr)^{-1/\alpha_\infty}\sqrt{\frac{\alpha_\infty}{2(\alpha+\alpha
_\infty)}}
\biggl ( \frac{\alpha_\infty}{2\alpha\beta} \biggl (\frac
{2D}{\alpha_\infty(\alpha_\infty- 1)} \biggr)^{\alpha_\infty
/\alpha}  \biggr)^{ \gamma/(\alpha+\alpha_\infty)}.
\end{eqnarray*}
\end{Example}

%%%%%%%%%%%%%%%%%%%%%%%%%%%%%%%%%%%%%%%%%%%%%%%%%%%%%%%%%%%%%%%%%%%%%%%%%%%%%%%%%%%%%%%%%%%%%%
%%%%%%%%%%%%%%%%%%%%%%%%%%%%%%%------fBm------%%%%%%%%%%%%%%%%%%%%%%%%%%%%%%%%%%%%%%%%%%%%%%%%%%%%%
%%%%%%%%%%%%%%%%%%%%%%%%%%%%%%%%%%%%%%%%%%%%%%%%%%%%%%%%%%%%%%%%%%%%%%%%%%%%%%%%%%%%%%%%%%%%%%%%%%%%%%%%%%
%s4 ###
\section{The case of fBm}\label{s.fbm}
In this section, we focus on the exact asymptotics of \refs{randsup}
for $\{X(t)\}$ being an fBm.
The self-similarity of fBm,
combined with Lemma \ref{l.product}, enables us to provide the
following theorem.

\begin{Theorem}\label{th.fbm}
Let $\{B_H(s) \dvtx  s \geq0\}$ be an fBm with Hurst parameter $H \in(0,1]$
and $T\in\calW(\alpha,\beta,\gamma, C)$
be a non-negative
random variable independent of $\{B_H(s) \dvtx  s \geq0\}$. If:
\begin{longlist}[(iii)]
\item[(i)] $H \in(0,1/2)$, then
\[
\sup_{s \in[0,T]} B_H(s) \in
\mathcal{W} \biggl(\frac{2\alpha}{2H + \alpha}, \beta_1 , \frac
{2\alpha- 3\alpha H + 2\gamma}{ \alpha+ 2H}, C_1
 \biggr);
\]
%
% \pr(\sup_{s \in[0, T]} B_H(s) > u)
% =
% A_1\mathcal{H}_H \,u^{\frac{2\alpha-
% 3\alpha H + 2\gamma}{ \alpha+ 2H}}
% \exp\!  [-B u^\frac{2\alpha}{2H +
% \alpha} ] (1 + o(1))
%as $u \to\infty$;
%
\item[(ii)] $H = 1/2$, then
\[
\sup_{s \in[0, T]} B_H(s) \in
\mathcal{W} \biggl(\frac{2\alpha}{2H + \alpha}, \beta_1 , \frac
{2\gamma}{\alpha+ 2H}, 2 C_2  \biggr);
\]
%
% \pr(\sup_{s \in[0, T]} B_H(s) > u)
% =
% 2 A_2 u^{\frac{2\gamma}{\alpha+ 2H}} \exp\!  [-B u^\frac{2
%as $u \to\infty$;
%
\item[(iii)] $H \in(1/2, 1]$, then
\[
\sup_{s \in[0, T]} B_H(s) \in
\mathcal{W} \biggl(\frac{2\alpha}{2H + \alpha}, \beta_1 , \frac
{2\gamma}{\alpha+ 2 H}, C_2  \biggr),
\]
%
% \pr(\sup_{s \in[0, T]} B_H(s) > u)
% =
% A_2 u^{\frac{2\gamma}{\alpha+ 2 H}} \exp\!  [-Bu^\frac{2
%as $u \to\infty$.
where
\begin{eqnarray*}
\beta_1 &=&\beta^{ {2H/(2H + \alpha)}}
\biggl (\frac{1}{2} \biggl(\frac{\alpha}{H} \biggr)^ {2H/(2H +
\alpha)} +
 \biggl(\frac{\alpha}{H} \biggr)^{- {\alpha/(2H + \alpha)}}
 \biggr)\\
C_1 &=& \mathcal{H}_H \biggl(\frac{1}{2} \biggr)^ {1/(2H)}
\frac{C}{\sqrt{2H + \alpha}}
H^ { (\alpha+ 6H + 2\gamma- 2)/(2\alpha+ 4H)}
(\alpha\beta)^ {(1 - 2H - \gamma)/(\alpha+ 2H)}, \\
 C_2 &=& \frac{C\sqrt{H}}{\sqrt{\alpha+ 2H}}
 \biggl( \frac{H}{\alpha\beta} \biggr)^ {\gamma/(\alpha+ 2H)}.
\end{eqnarray*}
\end{longlist}
\end{Theorem}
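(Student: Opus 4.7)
The $H$-self-similarity of fBm is the key: $\{B_H(ct)\}_{t\ge 0} \stackrel{d}{=} \{c^H B_H(t)\}_{t\ge 0}$ for every $c>0$. Since $T$ is independent of $\{B_H\}$, conditioning on $T$ yields
\[
\sup_{s \in [0,T]} B_H(s) \stackrel{d}{=} T^H \cdot M_1, \qquad M_1 := \sup_{s \in [0,1]} B_H(s),
\]
with $M_1$ independent of $T$. My plan is to establish the Weibullian tail of each factor separately and then to combine them via Lemma \ref{l.product}. For $T^H$, the substitution $\pr(T^H>t)=\pr(T>t^{1/H})$ together with \refs{weib} immediately gives $T^H \in \calW(\alpha/H,\, \beta,\, \gamma/H,\, C)$.

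Next I identify the Weibullian parameters of the deterministic supremum $M_1$ in each regime. For $H\in(1/2,1)$ the variance function $t^{2H}$ is convex, so Berman's result \refs{Ber} applies and yields $\pr(M_1>u)=\Psi(u)(1+o(1))$, i.e.\ $M_1 \in \calW(2,\,1/2,\,-1,\,1/\sqrt{2\pi})$. The degenerate case $H=1$ is immediate since $B_1(t)=t\calN$ implies $M_1=\calN^+$, with the same tail. For $H=1/2$, the reflection principle for Brownian motion gives $\pr(M_1>u)=2\Psi(u)$, hence $M_1\in\calW(2,\,1/2,\,-1,\,2/\sqrt{2\pi})$; the extra factor of two is exactly what produces the $2C_2$ in (ii). For $H\in(0,1/2)$, $t^{2H}$ is concave and Berman's hypothesis fails; I invoke instead a Pickands/Piterbarg-type theorem for nonstationary Gaussian processes whose variance attains its maximum at a boundary point. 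Using the boundary behaviour $\sigma(1)-\sigma(t)\sim H(1-t)$ together with the stationary-increment structure $\mean(B_H(s)-B_H(t))^2=|s-t|^{2H}$, the double-sum method produces an asymptotic of the form $\pr(M_1>u)\sim K_H\, u^{a_H}\,\Psi(u)$, placing $M_1$ in a Weibullian class $\calW(2,\,1/2,\,\gamma_H,\,C_H)$ whose constant $C_H$ carries the Pickands constant $\mathcal{H}_H$ and a factor $(1/2)^{1/(2H)}$.

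With the Weibullian parameters of both factors in hand, the conclusion follows by applying Lemma \ref{l.product} to the independent product $T^H\cdot M_1$. This reduces the proof to a somewhat lengthy but routine algebraic substitution of the tuple $(\alpha/H,\,\beta,\,\gamma/H,\,C)$ for $T^H$ together with the appropriate tuple for $M_1$ into the formulas of Lemma \ref{l.product}, and simplifying to the expressions displayed in (i)--(iii). The common denominators $\alpha+2H$ and factors $\alpha/H$ seen in $\widetilde\beta$ and $\widetilde\gamma$ arise directly from the combinatorics of Lemma \ref{l.product} at $\alpha_1=\alpha/H$, $\alpha_2=2$.

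The main obstacle is part (i): pinning down the exact Pickands/Piterbarg tail of $M_1$ when $H<1/2$. Berman's argument is unavailable, and one must handle the boundary maximum of $\sigma^2(t)=t^{2H}$ together with the locally self-similar fBm structure. Tracking both the precise power of $u$ and the precise constant---so that after combining with $T^H$ through Lemma \ref{l.product} the stated expressions for $\gamma_H$ (which must give $\widetilde\gamma=(2\alpha-3\alpha H+2\gamma)/(\alpha+2H)$) and for $C_1$ (with its $\mathcal{H}_H\,(1/2)^{1/(2H)}$ factor) are recovered---is the main calculational challenge.
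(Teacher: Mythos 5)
Your proposal follows essentially the same route as the paper: reduce via self-similarity to the independent product $T^H\cdot\sup_{s\in[0,1]}B_H(s)$ with $T^H\in\calW(\alpha/H,\beta,\gamma/H,C)$, identify the Weibullian class of the unit-interval supremum in each regime (the paper does all of $H\in(0,1)$, including your hard case $H<1/2$, via Theorem D3 of Piterbarg, which is exactly the boundary-maximum Pickands/Piterbarg result you invoke, yielding $\calW(2,\tfrac12,\tfrac1H-3,\tfrac{1}{H\sqrt{\pi}}2^{-(H+1)/(2H)})$), and then combine with Lemma \ref{l.product}. Your substitutions of Berman's result for $H\in(1/2,1)$ and the reflection principle for $H=1/2$ give the same tuples, so the argument is correct and matches the paper's proof in structure.
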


The following lemma plays an important role in the proof of Theorem
\ref{th.fbm}.

\begin{Lemma}\label{l.det.fbm}
Let $B_H(\cdot)$ be an fBm with Hurst parameter $H\in(0,1]$. If:
\begin{longlist}[(iii)]
\item[(i)] $H \in(0, 1/2)$, then
\[
\sup_{t\in[0,1]}B_H(t) \in
\mathcal{W} \biggl(2, \frac{1}{2}, \frac{1}{H} - 3, \frac{1}{H\sqrt
{\uppi}} 2^{- {(H+1)/(2H)}} \biggr);
\]
%
% \pr(\sup_{t\in[0,1]}B_H(t)>u) =
% \frac{1}{H\sqrt{\pi}} 2^{-\frac{H+1}{2H}} {\cal H}_H u^{\frac{1}{H} -
%3} \exp (-\frac{u^2}{2} ) (1 + o(1))
%as $u \to\infty$;\\
\item[(ii)] $H=1/2$, then
\[
\sup_{t\in[0,1]}B_H(t) \in
\mathcal{W} \biggl(2, \frac{1}{2}, -1, \frac{2}{\sqrt{2\uppi}}  \biggr);
\]
%
% \pr(\sup_{t\in[0,1]}B_H(t)>u)= \frac{2}{\sqrt{2\pi}}u^{-1}\exp (-
%as $u \to\infty$; \\
\item[(iii)] $H\in(1/2,1]$, then
\[
\sup_{t\in[0,1]}B_H(t) \in
\mathcal{W} \biggl(2, \frac{1}{2}, -1, \frac{1}{\sqrt{2\uppi}}  \biggr).
\]
%
% \pr(\sup_{t\in[0,1]}B_H(t)>u)=\frac{1}{\sqrt{2\pi}}u^{-1}\exp (-
%as $u \to\infty$.
\end{longlist}
\end{Lemma}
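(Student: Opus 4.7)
The plan is to prove the three cases of Lemma \ref{l.det.fbm} by three different arguments, since the sign of $H - 1/2$ controls whether $\sigma_{B_H}^2(t) = t^{2H}$ is convex, linear, or concave and therefore which asymptotic tool applies on the deterministic window $[0,1]$.

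For case (ii), $H = 1/2$, the process is standard Brownian motion and the reflection principle gives the exact identity $\pr(\sup_{t\in[0,1]} B_{1/2}(t) > u) = 2\Psi(u)$; the Mills-ratio expansion $\Psi(u) = (u\sqrt{2\pi})^{-1}\e{-u^2/2}(1+o(1))$ then rewrites this in the form $\calW(2, 1/2, -1, 2/\sqrt{2\pi})$. For case (iii), $H \in (1/2, 1]$, the variance is convex and \textbf{A1}--\textbf{A3} are satisfied for $H \in (1/2,1)$, so Berman's asymptotic \refs{Ber} applies on $[0,1]$ and gives $\pr(\sup_{t\in[0,1]} B_H(t) > u) = \pr(B_H(1) > u)(1+o(1)) = \Psi(u)(1+o(1))$; the degenerate endpoint $H = 1$, $B_1(t) = t\calN$, is handled by direct computation with the same conclusion. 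Inserting the Mills-ratio expansion yields $\calW(2, 1/2, -1, 1/\sqrt{2\pi})$.

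Case (i), $H \in (0, 1/2)$, is the substantive one: $\sigma_{B_H}^2$ is now concave and Berman's estimate is no longer available. The variance is nevertheless strictly increasing and attains its unique maximum over $[0,1]$ at the boundary point $t_0 = 1$, which puts one in the setting of the boundary Pickands--Piterbarg theorem (see \cite{Pit96}). The two local inputs are $\sigma_{B_H}(t) = 1 - H(1-t)(1+o(1))$ as $t \uparrow 1$ (linear decay of the standard deviation from its maximum) and, by stationary increments, $\var(B_H(s)-B_H(t)) = |s-t|^{2H}$, which yields $1 - r_{B_H}(s,t) = \frac{1}{2}|s-t|^{2H}(1+o(1))$ near $t = 1$. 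Feeding these into the boundary Pickands--Piterbarg formula produces
\[
\pr(\sup_{t\in[0,1]} B_H(t) > u) \sim \mathcal{H}_H \cdot \frac{1}{H} \cdot 2^{-1/(2H)} \cdot u^{1/H - 2}\,\Psi(u),
\]
and a further Mills-ratio expansion converts this into the Weibullian tail in (i), with the Pickands constant $\mathcal{H}_H$ sitting in the prefactor (consistent with its subsequent appearance in Theorem \ref{th.fbm}(i)).

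The main obstacle is case (i). Cases (ii) and (iii) are essentially restatements of the reflection principle and of Berman's theorem in the notation of the class $\calW$, whereas in (i) one must carry out a boundary Pickands argument on a one-sided window near $t_0 = 1$ in which the variance-decay exponent ($\beta = 1$) and the correlation exponent ($\alpha = 2H$) do not coincide, and then assemble the resulting constants --- including $\mathcal{H}_H$ --- into the exact numerical prefactor prescribed by the statement.
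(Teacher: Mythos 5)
Your proposal is correct and, on the only substantive case, takes the same route as the paper: your ``boundary Pickands--Piterbarg theorem'' for case (i) is exactly Theorem D.3 of \cite{Pit96}, which the paper invokes, and your verification of the two local inputs ($1-\sigma_{B_H}(t)\sim H(1-t)$ at the boundary maximum $t_0=1$ and $1-r_{B_H}(s,t)\sim\frac12|s-t|^{2H}$) matches the paper's computation line for line. Where you diverge is in cases (ii) and (iii): the paper applies Theorem D.3 \emph{uniformly} for all $H\in(0,1)$, letting the three regimes of that theorem (correlation exponent $2H$ less than, equal to, or greater than the variance-decay exponent $1$) automatically produce the three different constants, and only treats $H=1$ separately via $B_1(t)=t\calN$; you instead substitute the reflection principle for $H=1/2$ and Berman's asymptotics \refs{Ber} for $H\in(1/2,1)$. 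Both are valid; your version is more elementary for (ii) and (iii) at the cost of quoting three results instead of one, while the paper's single application of D.3 makes it transparent why the constant changes discontinuously at $H=1/2$. One further point in your favour: your case (i) prefactor $\frac{\mathcal{H}_H}{H\sqrt{\pi}}2^{-(H+1)/(2H)}$ carries the Pickands constant, whereas the printed statement of the lemma omits $\mathcal{H}_H$; since $\mathcal{H}_H$ does appear in the constant $C_1$ of Theorem \ref{th.fbm}(i), which is obtained from this lemma via Lemma \ref{l.product}, the omission in the lemma as stated is evidently a typographical slip and your constant is the intended one.
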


The proof of Lemma \ref{l.det.fbm} follows by a straightforward
application of \cite{Pit96}, Theorem D.3.

\begin{pf*}{Proof of Theorem \ref{th.fbm}}
%{\bf Ad (i)}.
Using the self-similarity of fBm, we have
\[
\pr\Bigl(\sup_{s \in[0, T]} B_H(s) > u\Bigr) = \pr\Bigl( T^H \sup_{s \in
[0,1]}B_H(s) > u \Bigr).
\]
Note that
$
T^H\in\mathcal{W}  (\frac{\alpha}{H}, \beta, \frac{\gamma
}{H}, C )
$
and (due to Lemma \ref{l.det.fbm})
$\sup_{s \in[0,1]} B_H(s)$ is asymptotically Weibullian.
% \nonumber&&
% \sup_{s\in[0,1]} B_H(s)\in
%
%  \{ \begin{array}{ll}
% \mathcal{W}  (
% 2, \frac{1}{2}, \frac{1}{H} - 3,\frac{1}{\sqrt{2\pi}H} (
%  )
% &
% \textrm{, $H \in (0,\frac{1}{2} )$} \\
%
% \mathcal{W}  ( 2, \frac{1}{2}, -1, \frac{2}{\sqrt{2\pi}} )
% &
% \textrm{, $H = \frac{1}{2}$}\\
%
% \mathcal{W}  ( 2, \frac{1}{2}, -1, \frac{1}{\sqrt{2\pi}} )
% & \textrm{, $H \in (\frac{1}{2} , 1 ]$}.
% \end{array}   %ta kropka jest tu jako koniec array bez nawiasu !

Thus, all of the cases (i), (ii) and (iii) follow by a straightforward
application
of Lemma \ref{l.product}.

\mbox{}
\end{pf*}

\begin{Rem} \label{rem.brown}
Note that if $\pr(T>t)=\exp(-At)$, then for a standard Brownian
motion case, some straightforward calculations give
\[
\pr\Bigl(\sup_{t\in[0,T]}B_{1/2}(t)>u\Bigr)=\exp\bigl(-\sqrt{2A}u\bigr)
\]
for each $u\ge0$.
\end{Rem}

%%%%%%%%%%%%%%%%%%%%%%%%%%%%%%%%%%%%%%%---FBM -
%Section---%%%%%%%%%%%%%%%%%%%%%%%%%%%%%%%%%%%%%%%%%%%%%%%%%%

%s5 ###
\section{Application to extremes of fractional Laplace
motion}\label{s.flm}
In this section, we apply Theorem \ref{th.fbm}
to the analysis of the asymptotics of the supremum distribution of
fractional Laplace motion over a deterministic interval.

Following \cite{Koz06}, we recall the definition of fractional Laplace
motion.

Let $\{\Gamma_t; t \geq0\}$ be a gamma process with parameter $\nu> 0$,
that is, a L\'evy process such that the increments
 $\Gamma_{t+s} - \Gamma_t$  have gamma distributions ${\cal
G}(s/\nu, 1)$ with density
\[
f(x) = \frac{1}{\Gamma(s/\nu)}
x^{s/\nu -1}\exp(-x),
\]
where $\Gamma(\cdot)$ denotes the gamma function.

Then, by fractional Laplace motion $\mathit{fLm}(\sigma, \nu)$, we denote the
process $\{ L_H(t); t \geq0 \}$ defined as follows:
\[
\{ L_H(t); t \geq0 \} \stackrel{d}{=} \{\sigma B_H(\Gamma_t); t \geq
0 \}.
\]
A standard fractional Laplace motion corresponds to $\sigma= \nu
= 1$ and is denoted by fLm. We refer to Kozubowski \textit{et
al.} \cite{Koz04,Koz06}
for motivations of interest in the analysis of this class of stochastic
processes.
%This class of stochastic processes was introduced by Kozubowski el.al
%as a proposition of a model for hydraulic conductivity fields in
%geophysics.
%It may also prove useful in modeling financial time series.\\

Before we present the asymptotics of $\pr(\sup_{s \in[0,S]} L_H(s) >
u)$, let us observe that
for given $S>0$, we have
$\Gamma_S \in\calW (1, 1, S-1, \frac{1}{\Gamma(S)} ).$
Indeed, applying Karamata's theorem (see,
e.g., \cite{Bin87}, Proposition 1.5.10), we have
\[
\pr(\Gamma_S > u)
=
\frac{1}{\Gamma(S)} \int_u^\infty x^{S - 1} \mathrm{e}^{-x}\, \mathrm{d}x
=
\frac{1}{\Gamma(S)} \int_{\mathrm{e}^u}^\infty(\log y)^{S -1} y^{-2} \, \mathrm{d}y
=
\frac{1}{\Gamma(S)} u^{S-1} \mathrm{e}^{-u} \bigl(1 + \mathrm{o}(1)\bigr)
\]
as $u \to\infty$.

In the following proposition, we give the exact asymptotics of the
supremum of
fLm for $H > 1/2$.
Let
\[
m_H =  \biggl(\frac{1}{2} \biggr)^ {1/(2H + 1)}
\biggl [ \biggl(\frac{1}{2H} \biggr)^ {2H/(2H +1)} +
 \biggl(\frac{1}{2H} \biggr)^ {1/(2H+1)} \biggr].
\]
\begin{Proposition}\label{p.flm}
Let $L_H$ be a standard fLm. If $H > 1/2$, then
\[
\sup_{s \in[0,S]} L_H(s) \in
\mathcal{W}
 \biggl( \frac{2}{2H + 1}, m_H, \frac{2S - 2}{1 + 2H}, \frac{H^
{(S + 2H)/(2 + 4H)}}{\Gamma(S)\sqrt{1 + 2H}}  \biggr).
\]
%
% \pr(\sup_{s \in[0,S]} L_H(s) > u)
% =
% \frac{1}{\Gamma(S)\sqrt{1 + 2H}} H^\frac{S + 2H}{2 + 4H}
% u^{\frac{2S - 2}{1 + 2H}}
% \exp ( - m_H
% u^\frac{2}{2H + 1}
%  ) (1 + o(1))
% as $u \to\infty$.
\end{Proposition}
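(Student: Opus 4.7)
The plan is to sandwich $\sup_{s\in[0,S]}L_H(s)$ between $L_H(S)=B_H(\Gamma_S)$ and $\sup_{t\in[0,\Gamma_S]}B_H(t)$, and then to identify both bounds with the same Weibullian law via Theorem~\ref{th.main} and Theorem~\ref{th.fbm}(iii).

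First I would record the almost-sure inequalities
\[
    B_H(\Gamma_S)\;\le\;\sup_{s\in[0,S]}B_H(\Gamma_s)\;=\;\sup_{s\in[0,S]}L_H(s)\;\le\;\sup_{t\in[0,\Gamma_S]}B_H(t),
\]
which follow from $0=\Gamma_0\le\Gamma_s\le\Gamma_S$ (monotonicity of the subordinator) and the definition $L_H(s)=B_H(\Gamma_s)$. The independence of $B_H$ and $\Gamma$, together with $\Gamma_S\in\calW(1,1,S-1,1/\Gamma(S))$ as derived in the passage above, puts us in the setting of Theorem~\ref{th.main}: for $H\in(1/2,1)$ the variance $\sigma^2_{B_H}(t)=t^{2H}$ satisfies \textbf{A1}--\textbf{A3} with $\alpha_\infty=2H\in(1,2)$, and the boundary case $H=1$ reduces to $B_1(t)=\calN\,t$, for which the sandwich is already an equality.

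The next step is to invoke Theorem~\ref{th.main} with $X=B_H$ and $T=\Gamma_S$, yielding
\[
    \pr\bigl(\sup_{t\in[0,\Gamma_S]}B_H(t)>u\bigr)\;=\;\pr(B_H(\Gamma_S)>u)(1+o(1))\qquad(u\to\infty).
\]
Since the lower and upper sides of the sandwich now have the same leading-order tail, we obtain $\pr(\sup_{s\in[0,S]}L_H(s)>u)=\pr(B_H(\Gamma_S)>u)(1+o(1))$. To extract the explicit Weibullian parameters I would apply Theorem~\ref{th.fbm}(iii) to $\sup_{t\in[0,\Gamma_S]}B_H(t)$ with $(\alpha,\beta,\gamma,C)=(1,1,S-1,1/\Gamma(S))$; equivalently, because $B_H(\Gamma_S)\stackrel{d}{=}\Gamma_S^H\,\calN$, one may feed $\Gamma_S^H\in\calW(1/H,1,(S-1)/H,1/\Gamma(S))$ and $\calN\in\calW(2,1/2,-1,1/\sqrt{2\pi})$ into Lemma~\ref{l.product}. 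Routine simplification of the exponents $\frac{\alpha_1\alpha_2}{\alpha_1+\alpha_2}$, $\frac{\alpha_1\alpha_2+2\alpha_1\gamma_2+2\alpha_2\gamma_1}{2(\alpha_1+\alpha_2)}$, etc., reproduces the tuple $\bigl(\tfrac{2}{2H+1},\,m_H,\,\tfrac{2S-2}{2H+1},\,H^{(S+2H)/(2+4H)}/(\Gamma(S)\sqrt{1+2H})\bigr)$ claimed in the proposition.

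The one genuine obstacle is that the Gamma subordinator is pure-jump, so $\{\Gamma_s:s\in[0,S]\}$ is only a countable subset of $[0,\Gamma_S]$ and the subordinated supremum on the left of the sandwich is strictly smaller, almost surely, than the interval supremum on the right. What rescues the argument is precisely the Berman-type conclusion of Theorem~\ref{th.main}: it says that, to leading order, the interval supremum is already captured by the single terminal value $B_H(\Gamma_S)=L_H(S)$, which is itself a sample of $L_H$, and this collapses the sandwich asymptotically.
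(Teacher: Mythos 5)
Your proof is correct and follows essentially the same route as the paper: the identical sandwich $B_H(\Gamma_S)\le\sup_{s\in[0,S]}L_H(s)\le\sup_{t\in[0,\Gamma_S]}B_H(t)$ (lower bound from the terminal value, upper bound from monotonicity of the Gamma subordinator), with the parameters extracted via Theorem~\ref{th.fbm}(iii) and Lemma~\ref{l.product} applied to $\Gamma_S^H\cdot\calN$. The only cosmetic difference is that you close the sandwich by citing Theorem~\ref{th.main}, whereas the paper simply computes both bounds and observes that for $H>1/2$ they yield the same Weibullian tuple.
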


\begin{pf}
First, we consider the lower bound.
We observe that
\[
\pr\Bigl(\sup_{s \in[0,S]} B_H(\Gamma_s) > u\Bigr)
\geq
\pr\bigl( B_H(\Gamma_S) > u\bigr)
=
\pr\bigl((\Gamma_S)^H \mathcal{N} > u\bigr).
\]
Combining the above with the facts that $(\Gamma_S)^H\in\mathcal{W}
 (\frac{1}{H}, 1, \frac{S-1}{H}, \frac{1}{\Gamma(S)} )$ and
$\mathcal{N}\in\mathcal{W}  (2, \frac{1}{2}, -1,
\frac{1}{\sqrt{2\uppi}} )$, together with Lemma \ref
{l.product}, we obtain a tight asymptotic
lower bound.
%we get that
% \nonumber
% \lefteqn
% {
% \pr(\sup_{s \in[0,1]} B_H(\Gamma(s)) > u)
% }\\
% %
% \nonumber
% &\geq&
% %
% \frac{1}{\sqrt{2H^2 + H}}
% \exp ( -  (\frac{1}{2} )^\frac{1}{2H + 1}
%  [ (\frac{1}{2H} )^\frac{2H}{2H +1} +
%  (\frac{1}{2H} )^\frac{1}{2H+1} ]
% u^\frac{2}{2H + 1}
%  ) (1 + o(1)),
%as $u \to\infty$.

We now focus on the upper bound.
Using the fact that sample paths of a gamma process are non-decreasing,
we get
\[
\pr\Bigl(\sup_{s \in[0,S]} B_H(\Gamma_s) > u\Bigr)
\leq
\pr\Bigl(\sup_{s \in[0,\Gamma_S]} B_H(s) > u\Bigr) .
\]
In order to complete the proof, it suffices to apply
(iii) of Theorem \ref{th.fbm}.
\end{pf}

\begin{Rem}
The case $H \le1/2$ should be handled with care. Applying the argument
presented in the proof of Proposition \ref{p.flm} gives
\begin{eqnarray*}
&&\pr\Bigl(\sup_{s \in[0,S]} L_H(s) > u\Bigr)
\\
&&\quad\geq
\frac{1}{\Gamma(S)\sqrt{1 + 2H}} H^ {(S + 2H)/(2 + 4H)}
u^{ {(2S - 2)/(1 + 2H)}}
\exp \bigl( - m_H
u^ {2/(2H + 1)}
 \bigr) \bigl(1 + \mathrm{o}(1)\bigr)
\end{eqnarray*}
as $u \to\infty$, and
\begin{eqnarray*}
\pr\Bigl(\sup_{s \in[0,S]} L_H(s) > u\Bigr)
&\leq&
\frac{1}{\Gamma(S)}
2^{- {1/(2H)}}
H^{- (2H + S +4)/(4H + 2)}
{\cal H}_H
u^ {(2SH - 4H + 1)/(H(2H + 1))}\\
&&{}\times\exp \bigl( - m_H
u^ {2/(2H + 1)}
 \bigr)\bigl (1 + \mathrm{o}(1)\bigr)
%u^\frac{2}{2H + 1} )
\end{eqnarray*}
as $u \to\infty$.
The above leads to the following logarithmic asymptotics
for $H \in(0, \frac{1}{2}]$:
\[
\frac{\log(\pr\sup_{s \in[0,S]} L_H(s) > u)}{u^ {2/(2H + 1)}}
= -m_H \bigl(1 + \mathrm{o}(1)\bigr)
\]
as $u \to\infty$.

In the case $H = \frac{1}{2}$, $S = 1$, due to Remark \ref
{rem.brown}, we
have
\[
\frac{1}{2}\exp\bigl(-\sqrt{2}u\bigr)
\leq
P\Bigl(\sup_{s \in[0,1]} L_{1/2}(s) > u\Bigr)
\leq
\exp\bigl(-\sqrt{2}u\bigr)
\]
for each $u \geq0$.
We conjecture that the exact asymptotics for $H\le1/2$ are influenced
by the distribution of jumps of the gamma process.
\end{Rem}

%%%%%%%%%%%%%%%%%%%%%%%%%%%%%%%%%%%%%%---proofs -
%Section---%%%%%%%%%%%%%%%%%%%%%%%%%%%%%%%%%%%%%%%%%%%%%%%%
%s6 ###
\section{Proofs}\label{s.proofs}

In this section, we present detailed proofs of Lemma \ref{l.product},
Theorem \ref{th.main}
and Corollary \ref{cor.exact}.

%s6.1 ###
\subsection[Proof of Lemma 2.1]{Proof of Lemma \protect\ref{l.product}}\label{s.lem}
We begin by considering the asymptotic
\[
\int_{U(x_0(u))} f(x,u) \exp[S(x,u)] \, \mathrm{d}x
\]
as $u \to\infty$
for particular forms of $f(x,u)$ and $S(x,u)$, where
$x_0(u)$ denotes the point at which the function $S(x,u)$ of $x$
achieves its maximum over $[0,\infty)$ and
\[
U(x_0(u)) = \{x\dvtx  |x-x_0(u)| \leq
q(u)|S_{x,x}''(x_0(u),u)|^{-1/2}\}
\]
for some suitable
chosen function $q(u)$.
%
%%-------------------------------Fedoryouk----------------------------
%

The following theorem can be found in, for example, \cite{Fed77},
Theorem 2.2.
%We state now the result of Theorem 2.2 from .

\begin{Lemma}[(Fedoryouk)] \label{th.fed}
Suppose that there exists a function $q(u) \to\infty$ as $u \to
\infty$
such that
%
%e6 ###
\begin{equation}
S''_{x,x}(x,u) = S''_{x,x}(x_0(u),u)[1+\mathrm{o}(1)] \label{W1Fed}
\end{equation}
and
%
%e7 ###
\begin{equation}
f(x,u) = f(x_0(u),u)[1+\mathrm{o}(1)]  \label{W2Fed}
\end{equation}
as $u \to\infty$ uniformly for $x \in U(x_0(u))$.
Then
\[
\int_{U(x_0(u))} f(x,u) \exp[S(x,u)] \, \mathrm{d}x
=
\sqrt{-\frac{2\uppi}{S''_{x,x}(x_0(u),u)}} f(x_0(u),u)
\exp[S(x_0(u),u)]\bigl(1 + \mathrm{o}(1)\bigr)
\]
as $u \to\infty$.
\end{Lemma}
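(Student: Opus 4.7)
The plan is to apply the Laplace (saddle-point) method, treating $x_0(u)$ as a local maximum of $S(\cdot,u)$ at which $S'_x(x_0(u),u)=0$. First I would Taylor-expand $S(x,u)$ around $x_0(u)$ to second order in $x$, writing
\[
    S(x,u) = S(x_0(u),u) + \tfrac12 S''_{xx}(\xi(x,u),u)(x-x_0(u))^2,
\]
for some $\xi(x,u)$ on the segment between $x$ and $x_0(u)$. Since $U(x_0(u))$ is a symmetric interval around $x_0(u)$, we have $\xi(x,u)\in U(x_0(u))$ whenever $x$ does, and hypothesis \eqref{W1Fed} then yields $S''_{xx}(\xi,u) = S''_{xx}(x_0(u),u)(1+\varepsilon_1(x,u))$ with $\varepsilon_1(x,u)\to 0$ uniformly on $U(x_0(u))$. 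Likewise, hypothesis \eqref{W2Fed} lets me factor $f(x,u)=f(x_0(u),u)(1+\varepsilon_2(x,u))$ with $\varepsilon_2(x,u)\to 0$ uniformly.

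Next I would perform the rescaling $y = (x-x_0(u))\sqrt{-S''_{xx}(x_0(u),u)}$ (which is well defined because $x_0(u)$ is a maximum, so $S''_{xx}(x_0(u),u)<0$). The Jacobian is $1/\sqrt{-S''_{xx}(x_0(u),u)}$, and by definition of $U(x_0(u))$ the range of $y$ becomes $[-q(u),q(u)]$. Factoring out $f(x_0(u),u)\exp[S(x_0(u),u)]$, the remaining integral is
\[
    \int_{-q(u)}^{q(u)} (1+\widetilde{\varepsilon}_2(y,u))
    \exp\left[-\tfrac12 (1+\widetilde{\varepsilon}_1(y,u)) y^2\right] dy,
\]
where $\widetilde{\varepsilon}_i$ are the transplanted versions of $\varepsilon_i$ and still vanish uniformly in $y$.

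The final step is to show this integral tends to $\sqrt{2\pi}$ as $u\to\infty$. The standard trick is to split the range into, say, $|y|\le M$ (for fixed large $M$) and $M<|y|\le q(u)$. On the first piece, uniform convergence of the $\widetilde{\varepsilon}_i$ to zero lets one pass to the limit under the integral, recovering $\int_{-M}^{M} e^{-y^2/2}\,dy$. On the tail piece, the uniform smallness of $\widetilde{\varepsilon}_1$ gives a Gaussian-type bound, e.g.\ $\exp[-\tfrac14 y^2]$ for $u$ large, whose contribution is controlled by $\int_{|y|>M} e^{-y^2/4}dy$, which is arbitrarily small as $M\to\infty$. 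Letting $u\to\infty$ first and $M\to\infty$ afterwards gives the desired $\sqrt{2\pi}$, and collecting constants yields the claim.

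The main technical obstacle I expect is the uniformity management: \eqref{W1Fed} and \eqref{W2Fed} are given as pointwise statements with $o(1)$, and one has to interpret them as genuinely uniform on the moving window $U(x_0(u))$ of width $q(u)|S''_{xx}(x_0(u),u)|^{-1/2}$ — so that both the factorization step and the tail estimate on the Gaussian integral go through without the error terms conspiring against the shrinking Gaussian weight. Once that uniformity is in hand, the remainder is essentially the textbook Laplace method.
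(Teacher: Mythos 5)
Your sketch is correct, but there is nothing in the paper to compare it against: the authors do not prove Lemma \ref{th.fed} at all --- they quote it as Theorem 2.2 of Fedoryuk \cite{Fed77}, and its role in the paper is purely that of an imported black box (it is then applied in Lemma \ref{L1.Fed} after verifying \refs{W1Fed} and \refs{W2Fed} for a concrete $S$ and $f$). What you have written is the standard Laplace-method derivation of that black box, and it goes through: the Lagrange form of the second-order Taylor remainder places $\xi(x,u)$ inside the interval $U(x_0(u))$, so \refs{W1Fed} controls it uniformly; the rescaling $y=(x-x_0(u))\sqrt{-S''_{x,x}(x_0(u),u)}$ turns the window into $[-q(u),q(u)]$; and the uniform bounds $1\pm\eta(u)$ on the coefficient of $y^2$ (with $\eta(u)\to 0$) squeeze the rescaled integral to $\sqrt{2\pi}$ --- your split at a fixed level $M$ works, though sandwiching between $\exp\left(-\tfrac12(1\mp\eta(u))y^2\right)$ is even quicker. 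The only point worth making explicit is the one you flag in passing: the argument needs $S'_x(x_0(u),u)=0$, i.e.\ that $x_0(u)$ is an interior critical point of a twice-differentiable $S(\cdot,u)$; this is implicit in the lemma's setup (and holds in the paper's application, where $x_0(u)\to\infty$), but it is a hypothesis you are using, not deriving.
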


Lemma \ref{th.fed} enables us to get the following exact asymptotics,
which will play an important role in further analysis.

%------------------------------------Lemma wniosek z
%Fedoroyuk------------------------

\begin{Lemma} \label{L1.Fed}
Let $\alpha_1$, $\alpha_2$, $\beta_1$, $\beta_2 > 0,$ $\gamma\in
\R$ and
$a(u) = u^ {\alpha_1/(2(\alpha_1 + \alpha_2))},$
$A(u) = u^ {2\alpha_1/(\alpha_1 + \alpha_2)}$.
Then
\[
\int_{a(u)}^{A(u)} x^\gamma\exp \biggl( {-\frac{\beta_1 u^{\alpha
_1}}{x^{\alpha_1}}} - \beta_2
x^{\alpha_2}  \biggr) \, \mathrm{d}x
=
C u^\delta
\exp [-\beta_3 u^{\alpha_3}  ]\bigl(1 + \mathrm{o}(1)\bigr)
\]
as $u \to\infty$,
where
%
%where $n$ is some constant satisfying $ n > \frac{2}{\alpha_2}, n >1 $,
\begin{eqnarray*}
 \alpha_3 &=& \frac{\alpha_1\alpha_2}{\alpha_1 + \alpha_2} ,\qquad
%%\vspace{1ex}\\
%
 \beta_3 = \beta_1^ {\alpha_2/(\alpha_1+\alpha_2)}
\beta_2^ {\alpha_1/(\alpha_1 + \alpha_2)}
 \biggl[  \biggl(\frac{\alpha_1}{\alpha_2} \biggr)^ {\alpha
_2/(\alpha_1 + \alpha_2)} +
 \biggl( \frac{\alpha_2}{\alpha_1}  \biggr)^ {\alpha_1/(\alpha
_1+\alpha_2)}
 \biggr] ,\\
 \delta&=& {\frac{\alpha_1(-\alpha_2 + 2\gamma+ 2)}{2(\alpha
_1+\alpha_2)} },\\
 C &=& \sqrt{2\uppi} \frac{1}{\sqrt{\alpha_1 + \alpha_2}}
(\alpha_1\beta_1)^ {(-\alpha_2 + 2\gamma+ 2)/(2(\alpha_1 +
\alpha_2))}
(\alpha_2\beta_2)^ {(-\alpha_1 - 2\gamma- 2)/(2(\alpha_1+\alpha_2))}.
\end{eqnarray*}
\end{Lemma}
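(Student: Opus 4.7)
The plan is to apply Fedoryouk's saddle-point formula (Lemma \ref{th.fed}) to the integrand $f(x,u)\exp[S(x,u)]$ with
\[
S(x,u) = -\frac{\beta_1 u^{\alpha_1}}{x^{\alpha_1}} - \beta_2 x^{\alpha_2}, \qquad f(x,u) = x^\gamma.
\]
First I would locate the unique critical point of $S(\cdot,u)$ on $(0,\infty)$. Solving $\partial_x S = 0$ yields
\[
x_0(u) = \left(\frac{\alpha_1\beta_1}{\alpha_2\beta_2}\right)^{1/(\alpha_1+\alpha_2)} u^{\alpha_1/(\alpha_1+\alpha_2)},
\]
which sits well inside $[a(u),A(u)]$ for all large $u$ since $1/2 < 1 < 2$. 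Substituting into $S$ and observing that both summands collapse to the same power of $u$ gives $S(x_0(u),u) = -\beta_3 u^{\alpha_3}$ with the stated $\alpha_3$ and $\beta_3$.

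Next I would differentiate once more and exploit the critical-point identity $\beta_1 u^{\alpha_1}/x_0(u)^{\alpha_1+1} = (\alpha_2\beta_2/\alpha_1)\,x_0(u)^{\alpha_2-1}$ to eliminate $\beta_1 u^{\alpha_1}$ from $S_{xx}''$, obtaining
\[
S_{xx}''(x_0(u),u) = -\alpha_2\beta_2(\alpha_1+\alpha_2)\,x_0(u)^{\alpha_2-2}.
\]
Plugging $\sqrt{-2\pi/S_{xx}''(x_0(u),u)}$ and $f(x_0(u),u) = x_0(u)^\gamma$ into the conclusion of Fedoryouk's lemma and regrouping powers of $\alpha_1\beta_1$, $\alpha_2\beta_2$ and $u$ produces exactly the claimed exponent $\delta$ and constant $C$; this step is routine algebra.

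To apply Lemma \ref{th.fed} I would pick $q(u) = \log u$, giving a localisation window $U(x_0(u))$ of radius $r(u)\asymp x_0(u)^{1-\alpha_2/2}\log u$. Since $r(u)/x_0(u)\asymp (\log u)\,u^{-\alpha_1\alpha_2/(2(\alpha_1+\alpha_2))} \to 0$, on $U(x_0(u))$ the variable $x$ is a $(1+o(1))$-multiple of $x_0(u)$. This immediately yields $f(x,u) = f(x_0(u),u)(1+o(1))$ and $S_{xx}''(x,u) = S_{xx}''(x_0(u),u)(1+o(1))$, verifying the hypotheses \refs{W1Fed} and \refs{W2Fed}.

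The main obstacle is showing that the contribution from $[a(u),A(u)]\setminus U(x_0(u))$ is negligible. Since $S(\cdot,u)$ has $x_0(u)$ as its unique critical point and is monotone on either side, with $S_{xx}''(x_0(u),u) < 0$, a second-order Taylor expansion in the variable $y := x/x_0(u)$ about $y = 1$ gives $S(x_0(u),u) - S(x,u) \ge \frac{1}{4}|S_{xx}''(x_0(u),u)|(x - x_0(u))^2$ on a fixed-size neighborhood $\{|y-1| \le \tfrac12\}$, so that $S(x,u) \le S(x_0(u),u) - \frac{1}{4}(\log u)^2$ on $\partial U(x_0(u))$ and, by monotonicity of $S$, throughout $\{|y-1|\le \tfrac12\}\setminus U(x_0(u))$. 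Outside this fixed neighborhood, a direct inspection of $g(y):=(\alpha_2/\alpha_1)y^{-\alpha_1}+y^{\alpha_2}$ shows $g(y) - g(1)$ is bounded below by a positive constant on $\{|y-1|\ge \tfrac12\} \cap [a(u)/x_0(u),A(u)/x_0(u)]$, yielding $S(x,u) \le S(x_0(u),u) - c\,u^{\alpha_3}$ for some $c>0$ there. Multiplying the resulting exponential bound by the interval length $A(u)$ and by $\max\{x^\gamma : x\in[a(u),A(u)]\}$ still leaves a factor that decays faster than any power of $u$ relative to the main term $Cu^\delta\exp(-\beta_3 u^{\alpha_3})$. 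Combining the Fedoryouk output on $U(x_0(u))$ with this negligible tail completes the proof.
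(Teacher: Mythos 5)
Your proposal is correct and follows essentially the same route as the paper: locate the saddle point $x_0(u)=\left(\frac{\alpha_1\beta_1}{\alpha_2\beta_2}\right)^{1/(\alpha_1+\alpha_2)}u^{\alpha_1/(\alpha_1+\alpha_2)}$, apply Fedoryouk's lemma on a shrinking (relative to $x_0(u)$) window after verifying \refs{W1Fed}--\refs{W2Fed}, and show the two remaining pieces of $[a(u),A(u)]$ are negligible. The only differences are cosmetic --- you take a logarithmic window $q(u)=\log u$ and dispatch the tails via monotonicity of $S$ plus its value on $\partial U(x_0(u))$, whereas the paper uses a polynomial window $r(u)=u^{(1-\varepsilon)\alpha_1/(\alpha_1+\alpha_2)}$ and an explicit Taylor expansion of the exponent at $x_0(u)\mp r(u)$; both yield the same conclusion.
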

\begin{pf}
Let
$x_0(u) =  ( \frac{\alpha_1 \beta_1}{\alpha_2 \beta_2 }
 )^{{1/(\alpha_1 + \alpha_2)}}
u^ {\alpha_1/(\alpha_1 + \alpha_2)}$,
$r(u) = u^{ {(1-\varepsilon)\alpha_1/(\alpha_1 + \alpha_2)}}$
for some $\varepsilon\in(0,\break \min(\alpha_2/2,1))$
and $\alpha_3,\beta_3, \delta, C$ be as in Lemma \ref{L1.Fed}.
It is convenient to decompose the analyzed integral in the following way:
\begin{eqnarray*}
&&\int_{a(u)}^{A(u)} x^\gamma\exp \biggl( {-\frac{\beta_1 u^{\alpha
_1}}{x^{\alpha_1}}} - \beta_2
x^{\alpha_2}  \biggr) \,\mathrm{d}x \\
&&\quad =
\int_{a(u)}^{x_0(u)-r(u)} + \int_{x_0(u)-r(u)}^{x_0(u)+r(u)} +\int
_{x_0(u)+r(u)}^{A(u)}=I_1+I_2+I_3.
\end{eqnarray*}

Applying Lemma \ref{th.fed}, we have, as $u\to\infty$,
%
%e8 ###
\begin{eqnarray}\label{q1}
I_2= \int_{x_0(u)-r(u)}^{x_0(u)+r(u)} x^\gamma\exp \biggl( {-\frac
{\beta_1 u^{\alpha_1}}{x^{\alpha_1}}} - \beta_2
x^{\alpha_2}  \biggr) \,\mathrm{d}x
=
C u^\delta
\exp [-\beta_3 u^{\alpha_3}  ]\bigl(1 + \mathrm{o}(1)\bigr).
\end{eqnarray}

In order to complete the proof, it suffices to show that
$I_1,I_3=\mathrm{o}(I_2)$ as $u\to\infty$.
Since proofs for $I_1$ and $I_3$ are similar, we focus on the argument
that shows $I_1=\mathrm{o}(I_2)$ as $u\to\infty$.
Without loss of generality, we assume that $\gamma> 0$.
Then
\[
I_1
\leq
\bigl(x_0(u) - a(u)\bigr)^\gamma\bigl(x_0(u) - r(u) - a(u)\bigr)
\exp \biggl(-\frac{\beta_1 u^{\alpha_1}}{(x_0(u) - r(u))^{\alpha_1}}
- \beta_2 \bigl(x_0(u) - r(u)\bigr)^{\alpha_2}  \biggr),
\]
which, combined with the fact that (using a Taylor expansion)
%
%e9 ###
\begin{eqnarray}
&&
-\frac{\beta_1 u^{\alpha_1}}{(x_0(u) - r(u))^{\alpha_1}}
- \beta_2 \bigl(x_0(u) - r(u)\bigr)^{\alpha_2}
\nonumber
\\
&&\quad =
% \nonumber
% -\beta_1 u^{\alpha_1}  ((x_0(u))^{-\alpha_1} + \alpha_1 r(u)
%(x_0(u))^{-\alpha_1 - 1}
% + \frac{1}{2}\alpha_1(\alpha_1 + 1) (r(u))^2 (x_0(u) - \theta r(u))^{-
% %
% %
% \nonumber
% &&-
% \beta_2  ( (x_0(u))^{\alpha_2} - \alpha_2r(u)(x_0(u))^{\alpha_2 -
%1} +
% \frac{1}{2}\alpha_2 (\alpha_2 - 1) (r(u))^2 (x_0(u) + \theta r(u))^{
%
% \nonumber
% & = &
 -\beta_3 u^{\alpha_3} - \frac{1}{2} (\alpha_1 + \alpha_2) (\alpha_1
\beta_1)^ {(\alpha_2 - 2)/(\alpha_1 + \alpha_2)} (\alpha_2
\beta_2)^ {(\alpha_1 + 2)/(\alpha_1 + \alpha_2)}\\
&&\hspace*{5pt}\qquad{}\times u^{\alpha_1(\alpha_2-2\varepsilon)/(\alpha_1 + \alpha_2)} \bigl(1 + \mathrm{o}(1)\bigr)\nonumber
\end{eqnarray}
as $u \to\infty$, straightforwardly implies that $I_1=\mathrm{o}(I_3)$ as
$u\to\infty$ (since $\varepsilon<\alpha_2/2$).
This completes the proof.
\end{pf}
%
%%%%%%%%%%%%%%%%%%%%%%%%%%%%%%%%%%%%%%----Product---%%%%%%%%%%%%%%%%%%%%%%%%%%%%%%%%%%%%%%%%%%%%%%%%%%%%

\begin{pf*}{Proof of Lemma 2.1}
Let
$X\in\calW(\alpha_1, \beta_1, \gamma_1, C_1)$
and
$Y\in\calW(\alpha_2, \beta_2, \gamma_2, C_2)$
be independent non-negative random variables.
Define
$a(u) = u^ {\alpha_1/(2(\alpha_1 + \alpha_2))},$
$A(u) = u^ {2\alpha_1/(\alpha_1 + \alpha_2)}$
and consider the decomposition
\begin{eqnarray*}
\pr(XY > u) &=& \int_0^\infty\pr \biggl(X > \frac{u}{y} \biggr)\,\mathrm{d}F_Y(y) \\
& = &
\int_0^{a(u)} \pr\biggl (X > \frac{u}{y} \biggr)\,\mathrm{d}F_Y(y)
+
\int_{a(u)}^{A(u)} \pr \biggl(X > \frac{u}{y}
\biggr)\,\mathrm{d}F_Y(y)\\
&&{}+
\int_{A(u)}^\infty\pr \biggl(X > \frac{u}{y} \biggr)\,\mathrm{d}F_Y(y)\\
& = & I_1 + I_2 + I_3.
\end{eqnarray*}
We analyze each of the integrals $I_1, I_2, I_3$ separately.
In order to simplify notation, we introduce
$
h_1(u) = C_1 u^{\gamma_1} \exp ( -\beta_1 u^{\alpha_1}
)$ and
$h_2(u) = C_2 u^{\gamma_2} \exp ( -\beta_2 u^{\alpha_2} ).
$

%%%%%%%%%%%%%%%%%%%%%%%%%%%%%%%%%%%%%%%%---------I_1-----------%%%%%%%%%%%%%%%%%%%%%%%%%%%%%%%
%

\textit{Integral $I_1$.}
Since $X\in\calW(\alpha_1, \beta_1, \gamma_1, C_1)$,
for given $\varepsilon>0$ and $u$ large enough, we have
\begin{eqnarray*}\label{I_1}
I_1
&\leq&
(1 + \varepsilon) h_1\biggl (\frac{u}{a(u)} \biggr)\\
&=&
(1 + \varepsilon)C_1 u^{ {(\alpha_1+2\alpha_2)/(2(\alpha
_1+\alpha_2))}\gamma_1}
\exp \biggl( - \beta_1 u^{  {\alpha_1\alpha_2/(\alpha_1 +
\alpha_2)} +  {\alpha_1^2/(2(\alpha_1 + \alpha_2))}} \biggr).
% & = &
% o(C_3 u^{\gamma_3} \exp ( -\beta_3 u^{\alpha_3} ))
\end{eqnarray*}

%%%%%%%%%%%%%%%%%%%%%%%%%%-------------I_3------------------%%%%%%%%%%%%%%%%%%%%%%%%%%

\textit{Integral $I_3$.}
For $u$ sufficiently large, we have, as $u \to\infty$,
\[ \label{I_3}
I_3%\int_{A(u)}^\infty\pr (X > \frac{u}{y}  ) dF_Y(y)
 \leq
\pr\bigl(Y > A(u)\bigr)
=
C_2 u^{ {2\alpha_1\gamma_2/(\alpha_1+\alpha_2)}}
\exp \bigl( - \beta_2
u^ {2 \alpha_1\alpha_2/(\alpha_1 + \alpha_2)} \bigr)\bigl(1 +
\mathrm{o}(1)\bigr).
\]

%%%%%%%%%%%%%%%%%%%%%%%%%%%%%%%%%%%%%%%5

\textit{Integral $I_2$.}
We find upper and lower bounds of $I_2$ separately.
Using the fact that $X,Y$ are asymptotically Weibullian, we get,
for sufficiently large $u$,
\begin{eqnarray*}
&&\int_{a(u)}^{A(u)} \pr \biggl(X > \frac{u}{y} \biggr)\,\mathrm{d}F_Y(y)
\geq
(1 - \varepsilon)
\int_{a(u)}^{A(u)}
h_1 \biggl(\frac{u}{y} \biggr) \,\mathrm{d}F_Y(y)\\
&&\quad \geq
(1 - \varepsilon)
\int_{a(u)}^{A(u)}
\frac{\partial}{\partial y}  \Biggl[h_1 \biggl(\frac{u}{y} \biggr)
 \Biggr]
\pr(Y > y) \,\mathrm{d}y
+
(1 - \varepsilon)
h_1 \biggl(\frac{u}{a(u)} \biggr) \pr\bigl(Y > (a(u))\bigr)\\
&&\qquad {} -
(1 - \varepsilon) h_1  \biggl(\frac{u}{A(u)} \biggr)\pr\bigl(Y > A(u)\bigr)\\
&&\quad  \geq
(1 - \varepsilon)^2
\int_{a(u)}^{A(u)}
\frac{\partial}{\partial y}  \biggl[h_1 \biggl(\frac{u}{y} \biggr)
 \biggr]
h_2(y) \,\mathrm{d}y
+
(1 - \varepsilon)^2
h_1 \biggl(\frac{u}{a(u)} \biggr) h_2(a(u))\\
&&\qquad {}
-
(1 - \varepsilon^2) h_1  \biggl(\frac{u}{A(u)} \biggr)h_2(A(u))\\
&&\quad  =
(1 - \varepsilon)^2 I_4 + (1 - \varepsilon)^2 R_1 - (1 - \varepsilon^2)R_2.
\end{eqnarray*}
Analogously, for sufficiently large $u$, we have the upper bound
\[
I_2
\leq (1 + \varepsilon)^2 I_4 + (1 + \varepsilon)^2 R_1 - (1 -
\varepsilon^2)R_2.
\]

Additionally,
\begin{eqnarray*}\label{R_1}
R_1&=&
h_1 \biggl(\frac{u}{a(u)} \biggr)h_2(a(u))
\leq
h_1 \biggl(\frac{u}{a(u)} \biggr)\\
&=&
C_1u^ {(\alpha_1\gamma_1 + 2\alpha_2\gamma_1)/(2(\alpha_1 +
\alpha_2))}
\exp \bigl( -\beta_1 u^{ {\alpha_1\alpha_2/(\alpha_1 + \alpha
_2)}+ {\alpha_1^2/(2(\alpha_1 + \alpha_1))}}  \bigr)
\end{eqnarray*}
and
\[
R_2= h_1 \biggl(\frac{u}{A(u)} \biggr)h_2(A(u))
\leq
h_2(A(u))
=
C_2 u^ {2\alpha_1\gamma_2/(\alpha_1 + \alpha_2)}
\exp \bigl(-\beta_2 u^ {2\alpha_1 \alpha_2/(\alpha_1 + \alpha
_2)}  \bigr).
\]

Finally, applying Lemma \ref{L1.Fed}, we find the asymptotics of
integral $I_4$:
\[
I_4
=
C_3 u^{\gamma_3} \exp ( -\beta_3 u^{\alpha_3} ) \bigl(1 + \mathrm{o}(1)\bigr)
\]
as $u \to\infty$,
with
\begin{eqnarray*}
\alpha_3&=&\frac{\alpha_1\alpha_2}{\alpha_1 + \alpha_2},\qquad
\beta_3=\beta_1^ {\alpha_2/(\alpha_1+\alpha_2)}
\beta_2^ {\alpha_1/(\alpha_1 + \alpha_2)}
 \biggl[
 \biggl(\frac{\alpha_1}{\alpha_2} \biggr)^ {\alpha_2/(\alpha_1 +
\alpha_2)} +
 \biggl(
  \frac{\alpha_1}{\alpha_2}  \biggr)^ {\alpha_1/(\alpha
_1+\alpha_2)}  \biggr],
\\
\gamma_3 &=& \frac{\alpha_1\alpha_2 + 2 \alpha_1\gamma_2 + 2\alpha
_2\gamma_1}{2 (\alpha_1 + \alpha_2 )},\\
C_3 &=& \sqrt{2\uppi}C_1C_2\frac{1}{\sqrt{\alpha_1 + \alpha_2}} (
\alpha_1 \beta_1 )^ {(\alpha_2 - 2\gamma_1 + 2\gamma_2)/(2
(\alpha_1 + \alpha_2))} (\alpha_2 \beta_2)^ {(\alpha_1 -
2\gamma_2
+ 2\gamma_1)/(2(\alpha_1+\alpha_2))}.
\end{eqnarray*}
Since
$I_1,I_2,R_1,R_2 =\mathrm{o}((C_3 u^{\gamma_3} \exp( -\beta_3 u^{\alpha_3}))$
as $u\to\infty$, we have
\[
\pr(X\cdot Y>u)=I_4\bigl(1+\mathrm{o}(1)\bigr)=C_3 u^{\gamma_3} \exp ( -\beta_3
u^{\alpha_3} )\bigl(1+\mathrm{o}(1)\bigr)
\]
as $u\to\infty$. This
completes the proof.
\end{pf*}

%
%%%%%%%%%%%%%%%%%%%%%%%%%%%%%%%%%%%%%%----Lemma1---%%%%%%%%%%%%%%%%%%%%%%%%%%%%%%%%%%%%%%%%%%%%%%%%%%%%
%s6.2 ###
\subsection[Proof of Theorem 3.1]{Proof of Theorem \protect\ref{th.main}}\label{s.thmain}
Let
$\tau_1 = \frac{2}{\alpha_\infty+ 2\alpha}$,
$\tau_2 = \frac{4}{2\alpha_\infty+ \alpha}$ and
$ \delta= \delta(u) = \frac{\sigma_X^3(t)}{\dot{\sigma}_X(t)} 2
u^{-2} \log^2u $.
Additionally, let $\{Z(s) \dvtx  s \geq0 \}$ be a centered stationary
Gaussian process
with covariance function
$
\cov(Z(s), Z(s+t)) = \mathrm{e}^{-t^{\alpha_\infty}}.
$
The existence of such a process is guaranteed by the fact that
$\alpha_\infty<2$, which implies that the covariance of $Z(\cdot)$ is
positively defined; see, for example, proof of \cite{Pit79}, Theorem D.3.

The proof of Theorem \ref{th.main} is based on the following two lemmas.

\begin{Lemma}\label{l.cut1}
Let $X(t)$ be a centered Gaussian process with stationary  increments
such that conditions (\textup{A1})--(\textup{A3}) are satisfied.
%If
%$t = t(u)\to\infty$ as $u\to\infty$
%is such that
% \log u = o (\frac{u}{\sigma_X(t(u))} )
% as $u \to\infty$.
%$\lim_{u \to\infty} \frac{u}{\sigma_X(t)} = \infty$,
Then, for sufficiently large $u$, %for
%$ \delta= \delta(u) = \frac{\sigma_X^3(t)}{\dot{\sigma}_X(t)} 2
%u^{-2} \log^2u $
%
\[
\pr\Bigl(\sup_{s \in[0,t-\delta]} X(s) > u\Bigr)
\leq
\Psi   \biggl(\frac{u}{\sigma_X(t)} \biggr)
\exp\bigl(-\log^2(u)/2\bigr)
\]
uniformly for $t:=t(u)\in[u^{\tau_1},u^{\tau_2}]$.
\end{Lemma}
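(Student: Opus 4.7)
The overall strategy is a standard cut-off via Gaussian supremum bounds combined with a local Taylor expansion of $\sigma_X$ at the right endpoint $t$. The choice of $\delta(u)$ is tuned exactly so that $\Psi(u/\sigma_X(t-\delta))$ is a factor of roughly $\exp(-2\log^2 u)$ smaller than $\Psi(u/\sigma_X(t))$; this surplus is what will absorb the polynomial prefactor coming from any supremum estimate and still leave the required $\exp(-\log^2(u)/2)$ cushion.

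The concrete steps I would carry out are as follows. First, observe that by \textbf{A1} and $\sigma_X^2(0)=0$, the function $\sigma_X^2$ is convex, nonnegative, and hence nondecreasing, so $\max_{s\in[0,t-\delta]}\sigma_X(s)=\sigma_X(t-\delta)$. Next, apply a standard supremum bound for centered Gaussian processes with stationary increments (Borell's inequality, or a Piterbarg-type estimate) to obtain a bound of the form
\[
    \pr(\sup_{s \in [0,t-\delta]} X(s)>u)
     \leq P(u,t)\,\Psi\!\left(\frac{u}{\sigma_X(t-\delta)}\right),
\]
where $P(u,t)$ is polynomial in $u$ and $t$ uniformly for $t\in[u^{\tau_1},u^{\tau_2}]$; because $t\leq u^{\tau_2}$ and $\sigma_X^2(t)\leq D t^{\alpha_\infty}$ by \textbf{A3}, we can crudely bound $P(u,t)\leq u^K$ for some constant $K$. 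Then, use regular variation (\textbf{A2}) together with convexity (\textbf{A1}), via the Monotone Density Theorem, to deduce $\dot\sigma_X(t)\sim (\alpha_\infty/2)\sigma_X(t)/t$ as $t\to\infty$. Substituting into the definition of $\delta$ and using \textbf{A3} shows $\delta/t\leq Cu^{\tau_2\alpha_\infty-2}\log^2 u\to 0$ uniformly in the relevant range, since $\tau_2\alpha_\infty<2$. Having $\delta/t\to 0$, Taylor-expand
\[
    \sigma_X(t-\delta)=\sigma_X(t)-\dot\sigma_X(t)\delta(1+o(1))
\]
uniformly in $t$ (the second-order term is negligible by regular variation of $\dot\sigma_X$), and compute
\[
    \frac{1}{\sigma_X^2(t-\delta)}-\frac{1}{\sigma_X^2(t)}=\frac{2\dot\sigma_X(t)\delta}{\sigma_X^3(t)}(1+o(1))=\frac{4\log^2 u}{u^2}(1+o(1))
\]
by the definition of $\delta$. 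Plugging into $\Psi(v)\sim v^{-1}\phi(v)$ yields
\[
    \Psi\!\left(\frac{u}{\sigma_X(t-\delta)}\right)\leq \Psi\!\left(\frac{u}{\sigma_X(t)}\right)\exp(-2\log^2 u\,(1+o(1))).
\]
Finally, since $u^{K}\exp(-2\log^2 u(1+o(1)))\leq \exp(-\log^2(u)/2)$ for sufficiently large $u$, the lemma follows.

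The main obstacle is Step one: producing a Gaussian supremum bound with a polynomial prefactor $P(u,t)$ that is genuinely uniform over $t\in[u^{\tau_1},u^{\tau_2}]$. This is where assumptions \textbf{A1}--\textbf{A3} are really used together, since one needs control both on the global growth of $\sigma_X^2$ (via \textbf{A3}) and on the local regularity of the increment variance (via stationarity of increments and the smoothness encoded in \textbf{A1}). Everything else is a routine Taylor expansion together with the sharp choice of $\delta(u)$.
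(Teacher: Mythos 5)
Your overall architecture is right, and the second half of your argument (showing $\delta/t\to 0$ uniformly, and that the choice of $\delta$ converts $\Psi(u/\sigma_X(t-\delta))$ into $\Psi(u/\sigma_X(t))$ times a factor $\exp(-c\log^2 u)$ that swallows any polynomial prefactor) is correct and is essentially what the paper does. However, the step you yourself flag as ``the main obstacle'' is a genuine gap, not a routine citation. Borell's inequality gives a bound of the form $\exp\bigl(-(u-\E\sup_{s\le t}X(s))^2/(2\sigma_X^2(t-\delta))\bigr)$, and the ratio of this to $\Psi(u/\sigma_X(t-\delta))$ is of order $\exp\bigl(u\,\E\sup_{s\le t}X(s)/\sigma_X^2(t)\bigr)$. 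Since $\E\sup_{s\le t}X(s)\asymp t^{\alpha_\infty/2}$ and $t\le u^{\tau_2}$ with $\tau_2\alpha_\infty/2<1$, this ratio is $\exp(u^{c})$ for some $c>0$, i.e.\ super-polynomial; so Borell cannot deliver the bound $P(u,t)\,\Psi(u/\sigma_X(t-\delta))$ with $P$ polynomial that your argument requires. A ``Piterbarg-type estimate'' of the right form does exist, but it is exactly the content that has to be proved here, uniformly in $t\in[u^{\tau_1},u^{\tau_2}]$, and it is where \textbf{A1} and \textbf{A3} actually enter.

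The paper fills this gap as follows: it splits off $[0,1]$ (handled by Borell alone, since there the variance is bounded and the resulting bound is negligible), and on $[1,t-\delta]$ it passes to the normalized field $X(s)/\sigma_X(s)$, chops the interval into unit-length pieces, and on each piece uses the covariance lower bound
\[
\cov\Bigl(\tfrac{X(v)}{\sigma_X(v)},\tfrac{X(w)}{\sigma_X(w)}\Bigr)\ \ge\ 1-\frac{D|v-w|^{\alpha_\infty}}{2\sigma_X^2(1)}\ \ge\ \exp\Bigl(-\tfrac{D}{\sigma_X^2(1)}|v-w|^{\alpha_\infty}\Bigr),
\]
(valid by \textbf{A1} and \textbf{A3}) to compare, via Slepian's inequality, with a fixed stationary Gaussian process with covariance $\e{-t^{\alpha_\infty}}$. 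Piterbarg's Theorem D.2 applied to that stationary process then yields the prefactor $t\,(u/\sigma_X(t-\delta))^{2/\alpha_\infty}$, which is indeed polynomial in $u$ over the range considered. Without this (or an equivalent) comparison argument, your first displayed inequality is unsubstantiated, and the rest of the proof has nothing to stand on. Note also that the separate treatment of $[0,1]$ is not cosmetic: the normalization $X(s)/\sigma_X(s)$ degenerates as $s\to 0$, so the comparison argument only works away from the origin.
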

%
%%%%%%%%%%
%%%%%%%%%%%%%%%%%%%%%%%%%%%%%proof%%%%%%%%%%%%%%%%%%%%%%%%%%%%%
%%%%%%%%%%
%
\begin{pf}
Let $t:=t(u)\in[u^{\tau_1},u^{\tau_2}]$. Observe that
$\sigma^2_X(t) 2 u^{-2} \log^2(u) \to0$ uniformly for
$t\in[u^{\tau_1},u^{\tau_2}]$ as $u\to\infty$ and $
\lim_{t \to\infty} \frac{\sigma_X(t)}{t\dot{\sigma}_X(t)}
$=
$ \lim_{t \to\infty}\frac{2\sigma_X^2(t)}{ t\dot{\sigma}_X^2(t)}
=
\frac{2}{\alpha_\infty}
$ (due to \cite{Bin87}, formula (1.11.1)). Hence,
%
%e10 ###
\begin{equation} \label{delta}
\delta(u) = \mathrm{o}(t)\qquad \mbox{as } u \to\infty.
\end{equation}

%Note that by condition \refs{l.cut1.war1} $\lim_{u \to\infty}
%Now it suffices to show that $\frac{\sigma_X(t)}{t\dot{\sigma}_X(t)}$
%is bounded.\\
%By {\bf A2} $\sigma^2_X(t) = L(t) t^{\alpha_\infty} (1 + o(1))$ as $u
%for some slowly varying function $L(t)$. Using Monotone Density
%Theorem (see, e.g. Theorem 1.7.2. in \cite{Bin87})
%we obtain that
% 2\sigma_X(t)\dot{\sigma}_X(t) = \dot{\sigma}^2_X(t) = \alpha_\infty
%L(t) t^{\alpha_\infty- 1} (1 + o(1))
%as $u \to\infty$.
%Hence
% \dot{\sigma}_X(t) = \frac{\alpha_\infty}{2} \sqrt{L(t)} t^{\frac{
%as $u \to\infty$. And
% \lim_{t \to\infty} \frac{\sigma_X(t)}{t\dot{\sigma}_X(t)} = \frac{

Now, for sufficiently large $u$, we consider the following
decomposition:
\begin{eqnarray}\label{2_kawalki}
&&\pr\Bigl(\sup_{s \in[0 , t-\delta]} X(s) > u\Bigr)\nonumber\\
&&\quad \leq
\pr\Bigl(\sup_{s \in[0 , 1]} X(s) > u\Bigr)
\\
&&\qquad{}+
\sum_{k = 0}^{ ({D/\sigma_X^2(1)} )^
{1/\alpha_\infty}[t - \delta]}
\pr
\biggl( \sup_{s \in [1 +  ( {\sigma
_X^2(1)/D} )^ {1/\alpha_\infty} k ,
1 +  ( {\sigma_X^2(1)/D} )^ {1/\alpha_\infty
}(k + 1) ]}
\frac{X(s)}{\sigma_X(s)} > \frac{u}{\sigma_X(t - \delta)} \biggr).\nonumber
\end{eqnarray}
According to the Borell inequality (see, e.g., \cite{Adl90},
Theorem 2.1),
the first term
is bounded by
\[
\pr\Bigl(\sup_{s \in[0 , 1]} X(s) > u\Bigr)
\leq
\exp \biggl( - \frac{(u - \E\sup_{s \in[0 , 1]} X(s) )^2 }{2}
 \biggr)
\]
as $u \to\infty$.

Due to (A1), (A3), for each $v, w \geq1$ such that
$|v - w| \leq (\frac{\sigma^2_X(1)}{D} )^ {1/\alpha
_\infty}$,
\[
\cov \biggl( \frac{X(v)}{\sigma_X(v)},\frac{X(w)}{\sigma
_X(w)} \biggr)
\ge
\cov \biggl(Z  \biggl(  \biggl( \frac{D }{\sigma^2_X(1)} \biggr)^{
{1/\alpha_\infty}} v  \biggr) ,
Z \biggl( \biggl ( \frac{D }{\sigma^2_X(1)} \biggr)^{ {1/\alpha
_\infty}} w  \biggr)  \biggr).
\]
Thus, Slepian's inequality (see, e.g., \cite{Pit96}, Theorem C.1)
combined with \cite{Pit96}, Theorem D.2,
straightforwardly leads to
the following upper bound of \refs{2_kawalki}:
%
%e11 ###
\begin{eqnarray}\label{A2.L1}
&& \biggl(\frac{D}{\sigma_X^2(1)} \biggr)^ {1/\alpha_\infty}[t -
\delta]
\pr \biggl( \sup_{s \in [0 ,  ( {\sigma
_X^2(1)/D} )^ {1/\alpha_\infty} ]}
Z( s) > \frac{u}{\sigma_X(t - \delta)} \biggr) \nonumber
\\[-8pt]
\\[-8pt]
&&\quad  =
\mathcal{H}_{\alpha_\infty}
% ( \frac{D}{\sigma^2_X(1)} )^{\frac{1}{\alpha_\infty}}
t
\biggl ( \frac{u}{\sigma_X(t)}  \biggr)^ {2/\alpha_\infty}
\Psi \biggl( \frac{u}{\sigma_X(t-\delta)}  \biggr) \bigl(1 + \mathrm{o}(1)\bigr)\nonumber
\end{eqnarray}
as $u \to\infty$.
Hence, in order to complete the proof, it suffices to
note that
%
%e13 ###
%e12 ###
\begin{eqnarray}
\nonumber
\Psi \biggl( \frac{u}{\sigma_X(t - \delta)} \biggr)
& \leq&
4
\Psi \biggl(\frac{u}{\sigma_X(t)} \biggr)
\exp \biggl(-\frac{u^2( \sigma_X^2(t) - \sigma_X^2(t-\delta)
)}{2\sigma^2_X(t)\sigma_X^2(t-\delta)}  \biggr)\\
\nonumber
& \leq&
4
\Psi \biggl(\frac{u}{\sigma_X(t)} \biggr)
\exp \biggl( -\frac{u^2 \delta2\sigma_X(t - \theta\delta) \dot
{\sigma}_X(t - \theta\delta)}{2\sigma_X^4(t)}  \biggr)\\
\label{A3.L1}
& \leq&
4
\Psi \biggl(\frac{u}{\sigma_X(t)} \biggr)
\exp\biggl ( -\frac{ u^2 \delta\sigma_X(t) \dot{\sigma
}_X(t)}{2\sigma^4(t)}  \biggr)\\
\label{last1}
& = &
4
\Psi \biggl(\frac{u}{\sigma_X(t)} \biggr)
\exp(-\log^2(u)),
\end{eqnarray}
where $\theta\in(0,1)$, and \refs{A3.L1} is a consequence of \refs
{delta} and of the fact that, by condition (A1),
$\dot{\sigma}^2_X = 2\sigma_X(t)\dot{\sigma}_X(t)$ is monotone and
(in view of \cite{Bin87}, formula (1.11.1)) regularly varying at $\infty$. %{

Thus, combining \refs{2_kawalki} with \refs{A2.L1} and
\refs{last1}, for sufficiently large $u$,
\begin{eqnarray*}
\pr\Bigl(\sup_{s \in[0 , t-\delta]} X(s) > u\Bigr)
&\le&
4 \mathcal{H}_{\alpha_\infty}
% ( \frac{D}{\sigma^2_X(1)} )^{\frac{1}{\alpha_\infty}}
t\cdot
 \biggl( \frac{u}{\sigma_X(t)}  \biggr)^ {2/\alpha_\infty}
\Psi   \biggl(\frac{u}{\sigma_X(t)} \biggr)
\exp\bigl(-\log^2(u)\bigr)\bigl(1 + \mathrm{o}(1)\bigr)\\
&\le&
\Psi  \biggl (\frac{u}{\sigma_X(t)} \biggr)
\exp\bigl(-\log^2(u)/2\bigr),
\end{eqnarray*}
uniformly for $t\in[u^{\tau_1},u^{\tau_2}]$.
This completes the proof.
\end{pf}
%
%%%%%%%%%%%%%%%%%%%%%%%%%%%%%%%%%%%%%%----Lemma2---%%%%%%%%%%%%%%%%%%%%%%%%%%%%%%%%%%%%%%%%%%%%%%%%%%%%

\begin{Lemma} \label{l.as}
Let $X(t)$ be a centered Gaussian process with stationary  increments
such that conditions  (\textup{A1})--(\textup{A3}) are satisfied.
Then, for sufficiently large $u$,
\[
\pr\Bigl(\sup_{s \in[t-\delta, t]} X(s) > u\Bigr) \leq(1 + \varepsilon)
\Psi \biggl( \frac{u}{\sigma_X(t)}  \biggr)
\]
%
%uniformly
uniformly for $t:=t(u)\in[u^{\tau_1},u^{\tau_2}]$.
\end{Lemma}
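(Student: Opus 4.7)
The plan is to follow the Slepian--Piterbarg strategy of the proof of Lemma \ref{l.cut1}, but with a stretching factor in the Slepian comparison that is adapted to the window $[t-\delta,t]$, rather than the fixed constant $(D/\sigma_X^2(1))^{1/\alpha_\infty}$ used there. Since $\sigma_X^2$ is non-decreasing (from \textbf{A1} together with $X(0)=0$), its maximum on $[t-\delta,t]$ is attained at $t$, giving
\[
\pr\!\left(\sup_{s\in[t-\delta,t]}X(s)>u\right) \leq \pr\!\left(\sup_{s\in[t-\delta,t]}\frac{X(s)}{\sigma_X(s)}>\frac{u}{\sigma_X(t)}\right).
\]
Setting $c(t):=(D/\sigma_X^2(t-\delta))^{1/\alpha_\infty}$, assumption \textbf{A3} together with the elementary inequality $1-x\geq e^{-2x}$ for $x\in[0,\tfrac{1}{2}]$ (applicable because $\delta^{\alpha_\infty}/\sigma_X^2(t-\delta)\to 0$ uniformly in the range of $t$ considered) yields, for $v,w\in[t-\delta,t]$,
\[
\cov\!\left(\tfrac{X(v)}{\sigma_X(v)},\tfrac{X(w)}{\sigma_X(w)}\right) \geq 1 - \tfrac{D|v-w|^{\alpha_\infty}}{2\sigma_X^2(t-\delta)} \geq \cov\bigl(Z(c(t)v),Z(c(t)w)\bigr),
\]
where $Z$ is the stationary Gaussian process with covariance $e^{-|\cdot|^{\alpha_\infty}}$ introduced in the proof of Lemma \ref{l.cut1}. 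Slepian's inequality combined with stationarity of $Z$ and Piterbarg's Theorem D.2 (partitioning $[0,c(t)\delta]$ into unit-length pieces as in Lemma \ref{l.cut1} if $c(t)\delta>1$) then produces
\[
\pr\!\left(\sup_{s\in[t-\delta,t]}X(s)>u\right) \leq \mathcal{H}_{\alpha_\infty}\,c(t)\delta\,(u/\sigma_X(t))^{2/\alpha_\infty}\,\Psi(u/\sigma_X(t))(1+o(1))
\]
uniformly in $t\in[u^{\tau_1},u^{\tau_2}]$.

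The task then reduces to showing that the prefactor $c(t)\delta\,(u/\sigma_X(t))^{2/\alpha_\infty}$ tends to $0$ uniformly on this range. Substituting the definition of $\delta$, the relation $\dot\sigma_X(t)\sim(\alpha_\infty/2)\sigma_X(t)/t$ (consequence of \textbf{A2} and (1.11.1) of \cite{Bin87}), and $\sigma_X^2(t)\sim L(t)t^{\alpha_\infty}$ with $L$ slowly varying, a routine computation shows that for $t=u^\tau$ the prefactor is $u^{\tau(\alpha_\infty-1)+2/\alpha_\infty-2}\log^2 u$ up to slowly varying factors. Since this exponent is affine in $\tau$, it suffices to check negativity at the two endpoints: one obtains $4\alpha(1-\alpha_\infty)/[\alpha_\infty(\alpha_\infty+2\alpha)]$ at $\tau=\tau_1$ and $2\alpha(1-\alpha_\infty)/[\alpha_\infty(2\alpha_\infty+\alpha)]$ at $\tau=\tau_2$, both strictly negative because $\alpha_\infty\in(1,2)$. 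Hence the displayed upper bound is $o(\Psi(u/\sigma_X(t)))$ uniformly, and in particular $\leq(1+\varepsilon)\Psi(u/\sigma_X(t))$ for all $u$ large enough.

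The main obstacle is the replacement of the constant stretching factor by the tighter $t$-dependent $c(t)\sim(\sigma_X^2(t))^{-1/\alpha_\infty}$: the constant choice used in Lemma \ref{l.cut1} would leave an irremovable factor of order $t\cdot(u/\sigma_X(t))^{2/\alpha_\infty}$ that does \emph{not} vanish on the range $t\in[u^{\tau_1},u^{\tau_2}]$, whereas the sharper $c(t)$ contributes a cancelling factor $\sim t^{-1}$ yielding the net exponent above. Assumption \textbf{A3}, which controls $\sigma_X^2(r)$ universally at all scales---including the small scale $r=|v-w|\leq\delta$---is precisely what validates the refined Slepian comparison without any further regularity assumption on $\sigma_X^2$ near $0$.
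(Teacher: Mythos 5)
Your setup — the reduction to $\sup_{s\in[t-\delta,t]}X(s)/\sigma_X(s)$, the Slepian comparison with the stationary process $Z$ using a $t$-dependent stretching factor of order $(\sigma_X^2(t))^{-1/\alpha_\infty}$, and the computation showing that the rescaled interval length $c(t)\delta\,(u/\sigma_X(t))^{2/\alpha_\infty}$ tends to $0$ uniformly on $[u^{\tau_1},u^{\tau_2}]$ — all matches the paper's proof. But the final analytic step is wrong. You invoke Piterbarg's Theorem D.2 to write
\[
\pr\Bigl(\sup_{s\in[t-\delta,t]}X(s)>u\Bigr)\le \mathcal{H}_{\alpha_\infty}\,c(t)\delta\,(u/\sigma_X(t))^{2/\alpha_\infty}\,\Psi(u/\sigma_X(t))(1+o(1))
\]
and then conclude that the right-hand side, hence the probability, is $o(\Psi(u/\sigma_X(t)))$. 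That conclusion is impossible: trivially $\sup_{s\in[t-\delta,t]}X(s)\ge X(t)$, so the left-hand side is at least $\Psi(u/\sigma_X(t))$. The Pickands-type asymptotic $\mathcal{H}_{\alpha_\infty}\,L\,v^{2/\alpha_\infty}\Psi(v)$ of Theorem D.2 is valid only when the rescaled length $L v^{2/\alpha_\infty}$ stays bounded away from $0$ (indeed tends to $\infty$); in the regime you have established, where $L v^{2/\alpha_\infty}\to 0$, it simply does not apply, and "partitioning into unit-length pieces" is vacuous because the interval is shorter than one piece (and a single unit piece would give the useless bound $\mathcal{H}_{\alpha_\infty}v^{2/\alpha_\infty}\Psi(v)$).

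The missing ingredient is the local version of Pickands' lemma: for an interval of length $\varepsilon_1 v^{-2/\alpha_\infty}$ one has $\pr(\sup Z>v)\le(1+\varepsilon_1)\mathcal{H}_{\alpha_\infty}(\varepsilon_1)\Psi(v)$ (Theorem D.1 in Piterbarg), combined with the continuity property $\mathcal{H}_{\alpha_\infty}(s)\to 1$ as $s\to 0$. This is exactly how the paper converts the uniform smallness of $u^{2/\alpha_\infty}\delta\,(\sigma_X(t))^{-4/\alpha_\infty}$ into the bound $(1+\varepsilon)\Psi(u/\sigma_X(t))$: the supremum over a shrinking (in Pickands scale) interval behaves like the value at a single point, up to a factor tending to $1$, not like $o(\Psi)$. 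Your estimate of the interval length is the right preparatory work; you need to feed it into Theorem D.1 rather than Theorem D.2.
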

\begin{pf}
Let $\varepsilon> 0$. Then
\[
\pr\Bigl(\sup_{s \in[t-\delta, t]} X(s) > u\Bigr)
 \leq
\pr\biggl (\sup_{s \in[t -
\delta, t]} \frac{X(s)}{\sigma_X(s)} > \frac{u}{\sigma_X(t)} \biggr).
\]
Using the fact that
for each $v, w \in[t - \delta, t]$,
\[
\cov \biggl( \frac{X(v)}{\sigma_X(v)},\frac{X(w)}{\sigma
_X(w)} \biggr)\ge
\cov \biggl(Z \biggl ( \biggl ( \frac{2D }{\sigma^2_X(t)}
\biggr)^{ {1/\alpha_\infty}} v  \biggr) ,
Z \biggl( \biggl(\frac{2D}{ \sigma_X^2(t)} \biggr)^{ {1/\alpha
_\infty}} w  \biggr)  \biggr),
\]
Slepian's inequality gives
%
%e14 ###
\begin{eqnarray}\label{nowe1}
&&\pr \biggl(\sup_{s \in[t -
\delta, t]} \frac{X(s)}{\sigma_X(s)} > \frac{u}{\sigma_X(t)}
\biggr)\nonumber\\
&&\quad \leq
\pr \biggl(\sup_{s \in[t - \delta, t]}
Z \biggl( \biggl (\frac{2D}{\sigma_X^2(t)}  \biggr)^ {1/\alpha
_\infty} s \biggr) > \frac{u}{\sigma_X(t)} \biggr)
\\
&&\quad  =
\pr \biggl( \sup_{s \in [0  ,  (2D)^ {1/\alpha_\infty
} u^{ {2/\alpha_\infty}}
\delta(\sigma_X(t))^{- {4/\alpha_\infty}}  (
{u/\sigma_X(t)} )^{- {2/\alpha_\infty}}  ] } Z(s)
>  \frac{u}{\sigma_X(t)}  \biggr).\nonumber
\end{eqnarray}
Observe that for each $\varepsilon_1 > 0$, there exists $u$ large
enough such that
$(2D)^{{1}/{\alpha_\infty}} u^{{2}/{\alpha_\infty}} \times\break\delta
(\sigma_X(t))^{-{4}/{\alpha_\infty}}
\le\varepsilon_1$ uniformly for $t \in[u^{\tau_1}, u^{\tau_2}]$,
which, combined with \cite{Pit96}, Theorem D.1, implies
the following upper bound for \refs{nowe1}:
\begin{eqnarray*}
\pr \biggl( \sup_{s \in [0  ,  \varepsilon_1  (
{u/\sigma_X(t)} )^{- {2/\alpha_\infty}}  ] } Z(s)
> \frac{u}{\sigma_X(t)}  \biggr)
\le
(1 + \varepsilon_1) \mathcal{H}_{\alpha_\infty}(\varepsilon_1)\Psi
 \biggl(\frac{u}{\sigma_X(t)} \biggr)
\leq
(1 + \varepsilon) \Psi \biggl(\frac{u}{\sigma_X(t)} \biggr),
\end{eqnarray*}
where the last inequality is due to the fact that
$\mathcal{H}_{\alpha_\infty}(t)\to1$ as $t\to0$.

This completes the proof.
\end{pf}
%
%%%%%%%%%%%%%%%%%%%%%%%%%%%%%%%%%%%%%%----theorem---%%%%%%%%%%%%%%%%%%%%%%%%%%%%%%%%%%%%%%%%%%%%%%%%%%%%
%%%%%%%%%%%-------------------------------------main_Theorem---proof----%%%%%%%%%%%%%%%%%%%%%%%%%%%%%%%%%
\begin{pf*}{Proof of Theorem \ref{th.main}}
Since the lower bound is immediate, we focus on the analysis of the
upper bound.
We have
\begin{eqnarray*}
&&\pr\Bigl(\sup_{s \in[0, T]} X(s) > u\Bigr)\\
&&\quad  \leq
\int_0^{u^{\tau_1}} \pr\Bigl(\sup_{s \in[0, t]} X(s) > u\Bigr)\, \mathrm{d}F_T(t)  +
\int_{u^{\tau_1}}^{u^{\tau_2}} \pr\Bigl(\sup_{s \in[0, t-\delta]}
X(s) > u\Bigr) \, \mathrm{d}F_T(t) \\
&&\qquad {} +
\int_{u^{\tau_1}}^{u^{\tau_2}} \pr\Bigl(\sup_{s \in[t-\delta, t]}
X(s) > u\Bigr) \, \mathrm{d}F_T(t)
 +
\int_{u^{\tau_2}}^\infty\pr\Bigl(\sup_{s \in[0, t]} X(s) > u\Bigr) \, \mathrm{d}F_T(t)
\\
&&\quad  =
I_1 + I_2 + I_3 + I_4 .
\end{eqnarray*}
We now investigate the asymptotic behavior of each of the integrals.
%%%%%%%%%%%%%%%%%%%%%%%%%%%%%%%%%%%%----I_1--------%%%%%%%%%%%%%%%%%%

\textit{Integral $I_1$.}
\[
I_1
 \le
\pr\Bigl(\sup_{s \in[0,1]} X(s) > u\Bigr) + \pr\Bigl(\sup_{s \in[1, u^{\tau
_1}]} X(s) > u\Bigr).
\]

% Using the same argumentation as in Lemma \ref{l.cut1} we obtain that
%last expression is asymptoticaly smaller than
Following an argument analogous to that given in the proof of Lemma
\ref{l.cut1}, we obtain the following asymptotic
upper bound for the above sum:
%
%e15 ###
\begin{equation}\label{b.i1}
\operatorname{Const}u^{\tau_1}  \biggl( \frac{u}{\sigma_X(u^{\tau_1})}
\biggr)^ {2/\alpha_\infty}
\Psi \biggl(\frac{u}{\sigma(u^{\tau_1})} \biggr) \leq
\exp \bigl( - u^{ {2\alpha/(\alpha+ \alpha_\infty)
}+\varepsilon}  \bigr) \bigl(1 + \mathrm{o}(1)\bigr)
\end{equation}
as $u \to\infty$, for some $\varepsilon>0$.
%
%And becouse
% 2f(u) \exp ( -\frac{1}{4C} u^\frac{2\alpha}{\alpha+ \alpha_
% =
% o(-)
% And using Lemma \ref{L.Bound} we obtain that $I_1 = o(\pr(X(T) > u))$
%as $u \to\infty$.
%%%%%%%%%%%%%%%%%%%%%%%%%%%%%%%%%%%%%%------I_2-----%%%%%%%%%%%%%%%%%%%%%%%%

\textit{Integral $I_2$.}
According to Lemma \ref{l.cut1}, for all $t \in[u^{\tau_1}, u^{\tau
_2}]$ and for $u$ large enough,
\[
\pr\Bigl(\sup_{s \in[0, t-\delta]} X(s) > u\Bigr) \leq\Psi   \biggl(\frac
{u}{\sigma_X(t)} \biggr)
\exp\bigl(- \log^2(u)/2\bigr).
\]
Hence,
%
%e16 ###
\begin{eqnarray}\label{b.I2}
I_2
&\leq&
\exp\bigl(-\log^2(u)/2\bigr) \int_0^\infty\Psi   \biggl(\frac{u}{\sigma
_X(t)} \biggr) \, \mathrm{d}F_T(t)\nonumber
\\[-8pt]
\\[-8pt]
&=&\exp\bigl(-\log^2(u)/2\bigr)\pr\bigl(X(T)>u\bigr)=\mathrm{o}\bigl(\pr\bigl(X(T)>u\bigr)\bigr).\nonumber
\end{eqnarray}
%
%%%%%%%%%%%%%%%%%%%%%%%%%%%%%%%%%%%%%%%-------I_3-------%%%%%%%%%%%%%%%%%%%%%%%%%%%

\textit{Integral $I_3$.}
Due to Lemma \ref{l.as}, for each $\varepsilon> 0$ and $u$ large enough,
%
%e17 ###
\begin{equation}\label{b.i3}
I_3  \leq
(1 + \varepsilon) \int_0^\infty\psi \biggl(\frac{u}{\sigma
_X(t)} \biggr) \, \mathrm{d}F_T(t)
%
% \nonumber
=
(1 + \varepsilon) \pr\bigl( X(T) > u\bigr) .
\end{equation}
%
%%%%%%%%%%%%%%%%%%%%%%%%%%%%%----I_4-----%%%%%%%%%%%%%%%%%%%%%%%

\textit{Integral $I_4$.}
\[\label{b.i4}
I_4
=
\pr(T > u^{\tau_2} )
\le
\exp \bigl(- u^{ {2\alpha/(\alpha+ \alpha_\infty)}+\varepsilon
} \bigr)\bigl (1 + \mathrm{o}(1)\bigr)
\]
as $u \to\infty$, for some $\varepsilon>0$.

Observe that for each $\epsilon>0$ and sufficiently large $u$,
\begin{eqnarray*}
\pr\bigl(X(T)>u\bigr)%  )}{u^{\frac{2\alpha}{\alpha+\alpha_\infty}+
&=&\pr\bigl(\sigma_X(T)\mathcal{N}>u\bigr) % )}{u^{\frac{2\alpha}{\alpha+
\ge
\pr\bigl(\sigma_X(T)>u^{ {\alpha_\infty/(\alpha+\alpha_\infty)}}\bigr)
\pr\bigl(\mathcal{N}>u^{ {\alpha/(\alpha+\alpha_\infty)}}\bigr)\\
% )}{u^{\frac{2\alpha}{\alpha+\alpha_\infty}+\epsilon}}>0
&\ge&\exp \bigl( -u^{ {2\alpha/(\alpha+\alpha_\infty)}+\epsilon
} \bigr) .
\end{eqnarray*}
Thus $I_1,I_2,I_4=\mathrm{o}(I_3)$ as $u\to\infty$, which, in view of \refs
{b.i3}, implies that
\[
\pr \Bigl( \sup_{t\in[0,T]}X(t)>u  \Bigr)\le(1+\varepsilon)\pr\bigl(X(T)>u\bigr)
\]
for each $\varepsilon>0$ and sufficiently large $u$.
This completes the proof.
\end{pf*}

%%%%%%%%%%%%%%%%%%%%%%%%%%------------Theorem
%exact------------------------
%s6.3 ###
\subsection[Proof of Corollary 3.2]{Proof of Corollary \protect\ref{cor.exact}}\label{s.thexact}

By a straightforward inspection, we observe that
$\sigma_X^2(t)$ satisfies (A1)--(A3). Thus, in view of Theorem \ref
{th.main},
we have
\[
\pr\Bigl(\sup_{s \in[0,T]} X(s) >u \Bigr)= \pr\bigl(\sigma_X(T)\cdot\calN> u\bigr)\bigl(1
+ \mathrm{o}(1)\bigr)
\]
as $u\to\infty$.
Since $\mathcal{N}\in\mathcal{W}(2,1/2,-1,1/\sqrt{2\uppi})$, due to
Lemma \ref{l.product},
in order to complete the proof, it suffices to show that
$\sigma_X(T)\in\mathcal{W} (\frac{2\alpha}{\alpha_\infty
},\beta D^{\alpha/\alpha_\infty},\frac{2\gamma}{\alpha_\infty
},CD^{-1/\alpha_\infty}  ) $.
In view of
\[
\pr\bigl(\sigma_X(T)>u\bigr)%&=&\pr(T>(\sigma_X)^{-1}(u))\nonumber\\
= C ((\sigma_X)^{-1}(u))^\gamma\exp ( -\beta((\sigma
_X)^{-1}(u))^\alpha )\bigl(1+\mathrm{o}(1)\bigr)
\]
and the fact that $(\sigma_X)^{-1}(u)=D^{-1/\alpha_\infty
}u^{2/\alpha_\infty}(1+\mathrm{o}(1))$, this
reduces to
\[\label{exp.1}
\exp ( -\beta((\sigma_X)^{-1}(u))^\alpha )=\exp \biggl(
-\frac{\beta}{D^{\alpha/\alpha_\infty} }u^{2\alpha/\alpha_\infty
}  \biggr)\bigl(1+\mathrm{o}(1)\bigr)
\]
as $u\to\infty$, which follows by inspection.
%Let
%$f(t):=((\sigma_X)^{-1}(t))^\alpha$. In order to show
%|f(u)-f(\sigma_X( D^{-1/\alpha_\infty}u^{2/\alpha_\infty} ))|\to0
%as $u\to\infty$.
%Due to the mean value theorem
%|f(u)-f(\sigma_X( D^{-1/\alpha_\infty}u^{2/\alpha_\infty} ))|=|u-
%for $\theta=o(u)$. Following Bingham {\it et. al.} \cite{Bin87}, p.
%59, we have that
%$\frac{\sigma_X(t)}{t\dot{\sigma}_X(t) }=\frac{2\sigma^2_X(t)}{t\dot{
%applied to \refs{exp.3}
%with $t:=(\sigma_X)^{-1}(u+\theta)$
%leads to the following asymptotics for \refs{exp.3}
% \frac{((\sigma_X)^{-1}(u+\theta))^{\alpha}}{ \sigma_X((
%&=&
%{\rm Const}|u-\sigma_X( D^{-1/\alpha_\infty}u^{2/\alpha_\infty} )|u^{2
%as $u\to\infty$, since
%$|u-\sigma_X( D^{-1/\alpha_\infty}u^{2/\alpha_\infty} )|\le
% | \frac{u^2-\sigma^2_X( D^{-1/\alpha_\infty}u^{2/\alpha_\infty}
%)}{u} |=o(u^{1-2\alpha/\alpha_\infty})$.
%This completes the proof.

\section*{Acknowledgements}
Krzysztof D\c{e}bicki was partially supported by KBN Grant No. N
N2014079 33 (2007--2009) and by a Marie Curie Transfer of
Knowledge Fellowship of the European Community's Sixth Framework
Programme under contract number MTKD-CT-2004-013389.

\printhistory


\begin{thebibliography}{00}
%b1 ###
%
%b2 ###
\bibitem{Abu07}
Abundo, M. (2008). Some remarks on the maximum of a one-dimensional
diffusion process. \textit{Probab.
Math. Statist.} \textbf{28} 107--120.
\MR{2445506}
%Adler, R.J., Taylor, J.E. (2007). \textit{Random Fields and geometry.}
%Springer Monographs in Mathematics.
%
% \bibitem{Ber85}
% Berman, S.M. (1985) An asymptotic formula for the distribution of the
%maximum of Gaussian
% process with stationary increments. \textit{Journal of Applied
%Probability} \textbf{61,} 1871-1894.
%
%b3 ###
\bibitem{Adl90}
Adler, R.J. (1990). An introduction to continuity, extrema, and related
topics for general Gaussian
processes.
In
\textit{Inst. Math. Statist. Lecture Notes Monograph Series} \textbf
{12}. Hayward, CA: Inst. Math. Statist.
\MR{1088478}
%
%b4 ###
\bibitem{Bin87}
Bingham, N.H., Goldie, C.M. and Teugels, J.L. (1987).
\textit{Regular Variation}. Cambridge: Cambridge Univ. Press.
\MR{0898871}
%
%b5 ###
\bibitem{BDZ04}
Borst, S.C., D\c{e}bicki, K. and Zwart, A.P. (2004). The supremum of a
Gaussian process over a random interval.
\textit{Statist. Probab. Lett.} \textbf{68} 221--234.
\MR{2083491}
%
%b6 ###
\bibitem{Fed77}
Fedoryuk, M. (1977). \textit{The Saddle-Point Method}. Moscow: Nauka.
\MR{0507923}
%
%b7 ###
\bibitem{Koz04}
Kozubowski, T.J., Meerschaert, M.M., Molz, F.J. and Lu, S. (2004).
Fractional Laplace model for hydraulic conductivity. \textit{Geophys.
Res. Lett.} \textbf{31} 1--4. L08501.
%
%b8 ###
\bibitem{Koz06}
Kozubowski, T.J., Meerschaert, M.M. and Podg\'{o}rski, K. (2006). Fractional
Laplace motion. \textit{Adv. in Appl. Probab.} \textbf{38} 451--464.
\MR{2264952}
%
%b9 ###
\bibitem{Pit79}
Piterbarg, V.I. and Prisyazhn'uk, V. (1978). Asymptotic behaviour of the
probability of a large excursion for a non-stationary Gaussian
process. \textit{Theory Probab. Math. Statist.} \textbf{18} 121--133.
%
%b10 ###
\bibitem{Pit96}
Piterbarg, V.I. (1996). \textit{Asymptotic Methods in the Theory of
Gaussian Processes and Fields}.
\textit{Translations of Mathematical Monographs} \textbf{148}. Providence,
RI: Amer. Math. Soc.
\MR{1361884}
%
%b11 ###
\bibitem{Tal88}
Talagrand, M. (1988). Small tails for the supremum of Gaussian process.
\textit{Ann. Inst. H. Poincare Probab. Statist.} \textbf{24} 307--315.
\MR{0953122}
\bibitem{BDZ05}
Zwart, A.P., Borst, S.C. and D\c{e}bicki, K. (2005). Subexponential
asymptotics of hybrid fluid and ruin models. \textit{Ann. Appl.
Probab.} \textbf{15} 500--517.
\MR{2115050}
\end{thebibliography}
\end{document}